\newtheorem{theorem}{Theorem}[section]
\newtheorem{lemma}[theorem]{Lemma}
\newtheorem{cor}{Corollary}[section]
\numberwithin{equation}{section}
\definecolor{darkslategray}{rgb}{0.18, 0.31, 0.31}
\definecolor{warmblack}{rgb}{0.0, 0.26, 0.26}
\definecolor{astral}{RGB}{46,116,181}
\journal{....................... }
\begin{document}
	\begin{frontmatter}
		\title{ \textcolor{warmblack}{\bf On $\mathbb{A}$-numerical radius inequalities for $2\times2$ operator matrices-II }}

	\author[label1]{Satyajit Sahoo}\ead{satyajitsahoo2010@gmail.com}

		\address[label1]{P.G. Department of Mathematics, Utkal University, Vanivihar, Bhubaneswar-751004, India}

		\begin{abstract}
			\textcolor{warmblack}{
 				 The main goal of this article is to establish  several new upper and lower bounds for the $\mathbb{A}$-numerical radius of $2\times 2$ operator matrices, where $\mathbb{A}$ be the $2\times 2$ diagonal operator matrix whose diagonal entries are positive bounded operator $A$.}
		\end{abstract}
		
		\begin{keyword}
			$A$-numerical radius; Positive operator; Semi-inner product; Inequality; Operator matrix
		\end{keyword}
 	\end{frontmatter}
	
		\section{Introduction}\label{intro}
	Let $\mathcal{H}$ be a complex Hilbert space with inner product $\langle \cdot,\cdot\rangle$ and ${\mathcal{L}}(\mathcal{H})$ be the $C^*$-algebra of all bounded linear operators on $\mathcal{H}$.
	The {\it numerical range} of $T\in
	\mathcal{B(H)}$ is defined as
	$$W(T)=\{\langle Tx, x \rangle: x\in \mathcal{H}, \|x\|=1
	\}.$$ The {\it numerical radius} of $T$, denoted by $w(T)$,  is
	defined as $ w(T)=\displaystyle\sup\{|z|: z\in W(T) \}.$
	It is well-known that
	$w(\cdot)$ defines a norm on $\mathcal{H}$, and is equivalent to the
	usual operator norm $\|T\|=\displaystyle \sup  \{ \|Tx \|: x\in \mathcal{H}, \|x\|=1 \}.$ In fact, for
	every $T \in \mathcal{B(H)}$, 
	\begin{align}\label{p3100}
	\frac{1}{2}\|T\|\leq w(T)\leq \|T\|.
	\end{align}
	An interested reader is referred to the recent articles \cite{MBKS,TY,SND,SND1,SND2} for different generalizations, refinements and applications of numerical radius inequalities.

	Let $\|\cdot\|$ be the norm induced from $\langle \cdot,\cdot\rangle.$
	An operator $A\in{\mathcal{L}}(\mathcal{H})$ is called {\it selfadjoint} if $A=A^{*}$, where $A^{*}$ denotes the adjoint of $A$.  A selfadjoint operator $A\in{\mathcal{L}}(\mathcal{H})$ is called {\it positive} if $\langle Ax, x\rangle \geq 0$ for all $x \in \mathcal{H}$, and is called {\it strictly positive} if  $\langle Ax, x\rangle > 0$ for all non-zero $x\in \mathcal{H}$. We denote a positive (strictly positive) operator $A$ by $A \geq 0$ ($A > 0$).  We denote $\mathcal{R}(A)$ as the range space of $A$ and $\overline{\mathcal{R}(A)}$ as the norm
	closure of $\mathcal{R}(A)$ in $\mathcal{H}$. Let $\mathbb{A}$ be a $2\times 2$ diagonal operator matrix whose diagonal entries are positive operator $A$. Then $\mathbb{A} \in{\mathcal{L}}(\mathcal{H}\bigoplus \mathcal{H})$ and $\mathbb{A}\geq 0$. If $A\geq 0$, then it induces a positive semidefinite sesquilinear form, $\langle \cdot,\cdot\rangle_A: \mathcal{H}\times\mathcal{H}\rightarrow\mathbb{C}$ defined by $\langle x, y \rangle_A=\langle Ax,y\rangle,$ $x,y\in\mathcal{H}.$ 
	Let $\|\cdot\|_A$ denote the seminorm on $\mathcal{H}$ induced by $\langle \cdot, \cdot \rangle_A,$ i.e., $\|x\|_A=\sqrt{\langle x, x \rangle_A}$ for all $x \in \mathcal{H}.$ Then $\|x\|_A$ is a norm
	if and only if $A>0$. Also, $(\mathcal{H}, \|\cdot\|_A)$ is complete if and only if  $\mathcal{R}(A)$ is closed in $\mathcal{H}.$ Here onward, we fix $A$ and $\mathbb{A}$ for positive operators on $\mathcal{H}$ and $\mathcal{H}\bigoplus \mathcal{H}$, respectively. We also reserve the notation $I$ and $O$ for the identity operator  and the null operator on $\mathcal{H}$ in this paper.

	$\|T\|_A$ denotes  the $A$-operator seminorm of $T \in{\mathcal{L}}\mathcal{(H)}$. This 
	is defined as follows:
	$$\|T\|_A=\sup_{x\in \overline{\mathcal{R}(A)},~x\neq 0}\frac{\|Tx\|_A}{\|x\|_A}=\inf\left\{c>0: \|Tx\|_A\leq c\|x\|_A, 0\neq x\in \overline{\mathcal{R}(A)}\right\}<\infty.$$
	Let $${\mathcal{L}}^A\mathcal{(H)}=\{T\in \mathcal{B(H)}:\|T\|_A<\infty\}.$$ Then ${\mathcal{L}}^A\mathcal{(H)}$ is not a subalgebra of $\mathcal{B(H)}$, and $\|T\|_A=0$ if and only if $ATA=O.$ For $T\in{\mathcal{L}}^A\mathcal{(H)},$ we have 
	$$\|T\|_A=\sup \{|\langle Tx,y\rangle_A|: x,y\in \overline{\mathcal{R}(A),} ~\|x\|_A=\|y\|_A=1\}.$$
	If $AT\geq 0$, then the operator $T$ is called {\it $A$-positive}.  Note that if $T$ is $A$-positive, then 
	$$\|T\|_A=\sup \{\langle Tx,x\rangle_A: x\in \mathcal{H},  \|x\|_A=1\}.$$
	An operator $X\in \mathcal{B(H)}$ is called an {\it $A$-adjoint operator} of $T\in \mathcal{B(H)}$ if  $\langle Tx,y\rangle_A=\langle x, Xy\rangle_A$  for every $x,y\in \mathcal{H},$ i.e., $AX=T^*A.$
	By Douglas Theorem \cite{Doug}, the existence of an $A$-adjoint operator is not guaranteed.  An operator $T\in \mathcal{B(H)}$ may admit none, one or many $A$-adjoints.  $A$-adjoint  of an operator $T\in {\mathcal{L}}\mathcal{(H)}$ exists if and only if $\mathcal{R}(T^*A)\subseteq \mathcal{R}(A)$. Let us now denote  $${\mathcal{L}}_A\mathcal{(H)}=\{T\in \mathcal{B(H)}:\mathcal{R}(T^*A)\subseteq \mathcal{R}(A)\}.$$ Note that ${\mathcal{L}}_A\mathcal{(H)}$ is a subalgebra of $\mathcal{B(H)}$ which is neither closed nor dense in $\mathcal{B(H)}.$ Moreover, the following inclusions $${\mathcal{L}}_A\mathcal{(H)}\subseteq {\mathcal{L}}^A\mathcal{(H)}\subseteq{\mathcal{L}}\mathcal{(H)}$$ hold with equality if $A$ is injective and has a closed range.

	The {\it Moore-Penrose inverse} of $A\in \mathcal{B(H)}$ \cite{Nashed} is the
	operator $X:R(A)\bigoplus R(A)^{\perp} \longrightarrow \mathcal{H}$ which satisfies the following four equations:
	\begin{center}
		(1) $AXA = A$,~ (2) $XAX = X$,~ (3) $X A= P_{N(A)^{\perp}}$,~~(4) $A X=P_{\overline{\mathcal{R}(A)}}|_{R(A)\bigoplus R(A)^{\perp}}$.
	\end{center}
	Here $N(A)$  and  $P_L$ denote the null space of $A$ and the orthogonal projection onto $L$, respectively.
	The Moore-Penrose inverse
	is unique, and  is denoted by $A^\dagger.$ In general, $A^{\dagger}\notin \mathcal{B(H)}$. It is bounded if and only if  $\mathcal{R}(A)$  is closed. 
	If $A\in \mathcal{B(H)}$ is invertible, then $A^\dagger= A^{-1}.$ 
	If $T\in {\mathcal{L}}_A\mathcal{(H)},$ the reduced solution of the equation $AX=T^*A$ is a distinguished $A$-adjoint operator of $T,$ which is denoted by $T^{\#_A}$ (see \cite{ARIS2,Mos}). Note that $T^{\#_A}=A^\dagger T^* A$. If $T\in {\mathcal{L}}_A(\mathcal{H}),$ then $AT^{\#_A}=T^*A$, $\mathcal{R}(T^{\#_A})\subseteq \overline{\mathcal{R}(A)}$ and $\mathcal{N}(T^{\#_A})=\mathcal{N}(T^*A)$ (see \cite{Doug}).  An operator $T\in \mathcal{B(H)}$ is said to be {\it $A$-selfadjoint} if $AT$ is selfadjoint, i.e., $AT=T^*A.$ Observe that if $T$ is $A$-selfadjoint, then $T\in {\mathcal{L}}_A(\mathcal{H}).$  However,  in general, $T\neq T^{\#_A}.$ But, $T=T^{\#_A}$  if and only if $T$ is $A$-selfadjoint and $\mathcal{R}(T)\subseteq \overline{\mathcal{R}(A)}.$ If $T\in {\mathcal{L}}_A(\mathcal{H}),$ then $T^{\#_A}\in {\mathcal{L}}_A(\mathcal{H}),$  $(T^{\#_A})^{\#_A}=P_{\overline{\mathcal{R}(A)}}TP_{\overline{\mathcal{R}(A)}},$  and $\left((T^{\#_A})^{\#_A}\right)^{\#_A}=T^{\#_A}.$ Also, $T^{\#_A}T$ and $TT^{\#_A}$ are  $A$-positive operators, and
	\begin{align}\label{ineq0}
	\|T^{\#_A}T\|_A=\|TT^{\#_A}\|_A=\|T\|_A^2=\|T^{\#_A}\|_A^2=w_A(TT^{\#_A})=w_A(T^{\#_A}T).
	\end{align}
An operator $T$ is called $A$-bounded if there exists $\alpha>0$ such that $ \|Tx\|_{A} \leq \alpha \|x\|_{A},\;\forall\,x\in \mathcal{H}.$  By applying Douglas theorem, one can easily see that the subspace of all operators admitting $A^{1/2}$-adjoints, denoted by ${\mathcal{L}}_{A^{1/2}}(\mathcal{H})$, is equal the collection of all $A$-bounded operators, i.e.,
		$${\mathcal{L}}_{A^{1/2}}(\mathcal{H})=\left\{T \in {\mathcal{L}}(\mathcal{H})\,;\;\exists \,\alpha > 0\,;\;\|Tx\|_{A} \leq \alpha \|x\|_{A},\;\forall\,x\in \mathcal{H}  \right\}.$$
		Notice that ${\mathcal{L}}_{A}(\mathcal{H})$ and ${\mathcal{L}}_{A^{1/2}}(\mathcal{H})$ are two subalgebras of ${\mathcal{L}}(\mathcal{H})$ which are, in general, neither closed nor dense in ${\mathcal{L}}(\mathcal{H})$. Moreover, we have ${\mathcal{L}}_{A}(\mathcal{H})\subset {\mathcal{L}}_{A^{1/2}}(\mathcal{H})$ (see \cite{ARIS2,acg3}).
	
	An operator $U\in {\mathcal{L}}_A(\mathcal{H})$ is said to be {\it $A$-unitary} if $\|Ux\|_A=\|U^{\#_A}x\|_A=\|x\|_A$ for all $x\in \mathcal{H}.$ For $T,S\in {\mathcal{L}}_A(\mathcal{H}),$ we have $(TS)^{\#_A}=S^{\#_A}T^{\#_A},$  $(T+S)^{\#_A}=T^{\#_A}+S^{\#_A},$ $\|TS\|_A\leq \|T\|_A\|S\|_A$ and $\|Tx\|_A\leq \|T\|_A\|x\|_A$ for all $x\in \mathcal{H}.$
	In 2012, Saddi \cite{Saddi} introduced  {\it $A$-numerical radius} of $T$ for $T\in \mathcal{B(H)}$, which is denoted as $w_A(T)$, and is defined as follows:
	\begin{equation}\label{eqn1a}
	w_A(T)=\sup\{|\langle Tx,x\rangle_A|:x\in \mathcal{H}, \|x\|_A=1\}.
	\end{equation}
	From \eqref{eqn1a}, it follows that $$w_A(T)=w_A(T^{\#_A}) \mbox{  for   any  }   T\in{\mathcal{L}}_A(\mathcal{H}).$$
	 A fundamental inequality for the $A$-numerical radius is the power inequality (see \cite{MOS}) which says that for $T\in \mathcal{B(H)}$, 
	\begin{align}\label{power}
	w_A(T^n)\leq w_A^n(T), ~~n\in\mathbb{N}.
	\end{align}   
	
	Notice that the $A$-numerical radius of semi-Hilbertian space operators satisfies the weak $A$-unitary invariance property which asserts that
	\begin{equation}\label{weak}
w_{A}(U^{\#_A}TU)=w_{A}(T),
	\end{equation}
	for every $T\in {\mathcal{L}}_{A}(\mathcal{H})$ and every $A$-unitary operator $U\in {\mathcal{L}}_{A}(\mathcal{H})$ (see \cite[Lemma 3.8]{bfeki}).
	
	An interested reader may refer \cite{ARIS,ARIS2} for further properties of operators on Semi-Hilbertian space.\\

	Let $$\Re_A(T):=\frac{T+T^{\#_A}}{2}\;\;\text{ and }\;\;\Im_A(T):=\frac{T-T^{\#_A}}{2i},$$ for any arbitrary operator $T\in {\mathcal B}_A({\mathcal H})$.
	Recently, in 2019 Zamani \cite[Theorem 2.5]{Zam} showed that if $T\in{\mathcal{L}}_{A}(\mathcal{H})$, then
	\begin{align}\label{zm}
	w_A(T) = \displaystyle{\sup_{\theta \in \mathbb{R}}}{\left\|\Re_A(e^{i\theta}T)\right\|}_A=\displaystyle{\sup_{\theta \in \mathbb{R}}}{\left\|\Im_A(e^{i\theta}T)\right\|}_A.
	\end{align}
	In 2019, Zamani \cite{Zam} showed that if $T\in {\mathcal{L}}_A\mathcal{(H)}$, then 
	\begin{align}\label{ineq00}
	w_A(T)=\sup_{\theta\in \mathbb{R}}\left\|\frac{e^{i\theta}T+(e^{i\theta}T)^{\#_A}}{2}\right\|_A.
	\end{align}
	The author then extended the inequality \eqref{p3100} using $A$-numerical radius of $T$, and the same is produced below:
	\begin{align}\label{ineq1}
	\frac{1}{2}\|T\|_A\leq w_A(T)\leq \|T\|_A.
	\end{align}
	Furthermore,  if $T$ is $A$-selfadjoint, then $w_A(T)=\|T\|_A$.
	In 2019,  Moslehian {\it et al.} \cite{MOS} again continued the study of $A$-numerical radius and established some inequalities for $A$-numerical radius. Further generalizations and refinements of $A$-numerical radius are discussed in \cite{Pintu1, Pintu2, {NSD}}. 
	In 2020, Bhunia {\it et al.}  \cite{PINTU} obtained several $\mathbb{A}$-numerical radius inequalities. For more results on $\mathbb{A}$-numerical radius inequalities we refer the reader to visit \cite{feki04,Nirmal2,XYZ,KS}.

	In 2020, the concept of the $A$-spectral radius of $A$-bounded operators was introduced by Feki in \cite{feki01} as follows:
		\begin{equation}\label{newrad}
		r_A(T):=\displaystyle\inf_{n\geq 1}\|T^n\|_A^{\frac{1}{n}}=\displaystyle\lim_{n\to\infty}\|T^n\|_A^{\frac{1}{n}}.
		\end{equation}
		Here we want to mention that the proof of the second equality in \eqref{newrad} can also be found in \cite[Theorem 1]{feki01}. Like the classical spectral radius of Hilbert space operators, it was shown in \cite{feki01} that $r_A(\cdot)$ satisfies the commutativity property, i.e.
		\begin{equation}\label{commut}
		r_A(TS)=r_A(ST),
		\end{equation}
		for all $T,S\in {\mathcal{L}}_{A^{1/2}}(\mathcal{H})$. For the sequel, if $A=I$, then $\|T\|$, $r(T)$ and $\omega(T)$ denote respectively the classical operator norm, the spectral radius and the numerical radius of an operator $T$.

	The  objective of this paper is to present a few new $\mathbb{A}$-numerical radius inequalities for  $2\times 2$ operator matrices. 
	In this aspect, the rest of the paper is broken down as follows. In section 2, we collect a few results about $\mathbb{A}$-numerical radius inequalities which are required to state and prove the results in the subsequent section. Section 3 contains our main results, and is of two parts. Motivated by the work of Hirzallah et al. \cite{TY}, the first part presents several $\mathbb{A}$-numerical radius inequalities of $2\times 2$ operator matrices while the next part focuses on some $A$-numerical radius inequalities.

 \section{Preliminaries}
We need the following lemmas to prove our results.
\begin{lemma}\label{lm0}\textnormal{[Theorem 7 and corollary 2, \cite{feki01}]}
If $T\in{\mathcal{L}}_{A^{1/2}}(\mathcal{H})$.Then 
\begin{align}\label{Aeq100}
w_{A}(T)\leq \frac{1}{2}(\|T\|_A+\|T^2\|_A^{1/2}).
\end{align}
Further, if  $AT^2=0$, then 
\begin{equation}\label{Aeq101}
w_A(T)= \frac{\|T\|_A}{2}.
\end{equation}
\end{lemma}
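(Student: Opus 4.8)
The plan is to deduce the equality case from the inequality and then concentrate on the inequality itself. First, the second assertion is immediate: if $AT^{2}=O$ then $AT^{2}A=O$, so $\|T^{2}\|_{A}=0$, and the first inequality degenerates to $w_{A}(T)\le\tfrac12\|T\|_{A}$; combined with the lower bound $\tfrac12\|T\|_{A}\le w_{A}(T)$ from \eqref{ineq1}, this forces $w_{A}(T)=\tfrac12\|T\|_{A}$. Thus everything reduces to the first inequality. For that I would start from Zamani's formula \eqref{zm}, $w_{A}(T)=\sup_{\theta}\|\Re_{A}(e^{i\theta}T)\|_{A}$. Because $\|e^{i\theta}T\|_{A}=\|T\|_{A}$ and $\|(e^{i\theta}T)^{2}\|_{A}=\|e^{2i\theta}T^{2}\|_{A}=\|T^{2}\|_{A}$, the target right-hand side is unchanged under $T\mapsto e^{i\theta}T$; hence it suffices to prove $\|\Re_{A}(T)\|_{A}\le\tfrac12\big(\|T\|_{A}+\|T^{2}\|_{A}^{1/2}\big)$ and then take the supremum over $\theta$.

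To bound $\|\Re_{A}(T)\|_{A}$ I would work directly in the semi-Hilbertian space $(\mathcal{H},\langle\cdot,\cdot\rangle_{A})$. Since $\Re_{A}(T)$ is $A$-self-adjoint, $\|\Re_{A}(T)\|_{A}=\sup\{|\langle\Re_{A}(T)x,x\rangle_{A}|:\|x\|_{A}=1\}$; after possibly replacing $T$ by $-T$ (which alters none of $\|T\|_{A}$, $\|T^{2}\|_{A}$, $\|\Re_{A}(T)\|_{A}$) choose a maximizing sequence with $\langle\Re_{A}(T)x,x\rangle_{A}\to t:=\|\Re_{A}(T)\|_{A}$. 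The standard computation $\|\Re_{A}(T)x-tx\|_{A}^{2}\le 2t^{2}-2t\langle\Re_{A}(T)x,x\rangle_{A}\to 0$ then gives $\|T^{\#_A}x-(2tx-Tx)\|_{A}\to 0$. Writing $\beta=\langle Tx,x\rangle_{A}$, one has $\Re\beta=\langle\Re_{A}(T)x,x\rangle_{A}\to t$, and the relation $T^{\#_A}x\approx 2tx-Tx$ yields the effective identity $\langle T^{2}x,x\rangle_{A}=\langle Tx,T^{\#_A}x\rangle_{A}\approx 2t\beta-\|Tx\|_{A}^{2}$; taking real parts, $\|Tx\|_{A}^{2}\approx 2t^{2}-\Re\langle T^{2}x,x\rangle_{A}$ (up to terms vanishing along the sequence).

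The hard part is extracting the sharp \emph{arithmetic-mean} bound from these identities. Feeding in only the crude estimates $\|Tx\|_{A}\le\|T\|_{A}$ and $|\langle T^{2}x,x\rangle_{A}|\le\|T^{2}\|_{A}$ gives $2t^{2}\le\|T\|_{A}^{2}+\|T^{2}\|_{A}$, i.e. the weaker conclusion $w_{A}(T)\le\sqrt{\tfrac12(\|T\|_{A}^{2}+\|T^{2}\|_{A})}$; this falls short of the target, since $\tfrac12(\|T\|_{A}+\|T^{2}\|_{A}^{1/2})\le\sqrt{\tfrac12(\|T\|_{A}^{2}+\|T^{2}\|_{A})}$. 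The extremal case $AT^{2}=O$ reveals why: there $\|Tx\|_{A}$ at the maximizer is genuinely smaller than $\|T\|_{A}$, so no single-vector estimate can close the gap, and one must exploit the global equality $\|T\|_{A}=\sup_{\|v\|_{A}=1}\|Tv\|_{A}$. I expect this to be the delicate step, resolved by the Kittaneh-type optimization adapted to the $A$-setting in \cite{feki01}, which balances the estimate for $\langle T^{2}x,x\rangle_{A}$ against $\|T\|_{A}$ and $\|T^{2}\|_{A}^{1/2}$ so that the two contributions enter additively rather than in root-mean-square form. Once $\|\Re_{A}(T)\|_{A}\le\tfrac12(\|T\|_{A}+\|T^{2}\|_{A}^{1/2})$ is established, taking $\sup_{\theta}$ completes the proof.
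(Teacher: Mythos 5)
First, a point about the comparison itself: the paper never proves this lemma; it is imported verbatim from [Theorem~7 and Corollary~2, \cite{feki01}] as a preliminary, and its machinery is only replayed later (e.g.\ in the proof of Theorem~\ref{Athm103}). Measured against that source, your treatment of the \emph{second} assertion is correct and is the standard deduction: $AT^{2}=O$ gives $\|T^{2}x\|_{A}^{2}=\langle AT^{2}x,T^{2}x\rangle=0$ for all $x$, hence $\|T^{2}\|_{A}=0$, so \eqref{Aeq100} collapses to $w_{A}(T)\le\tfrac{1}{2}\|T\|_{A}$, and the lower bound $\tfrac{1}{2}\|T\|_{A}\le w_{A}(T)$ forces equality. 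Two caveats, though: \eqref{ineq1} and Zamani's formula \eqref{zm}, on which your whole reduction rests, are stated in this paper only for $T\in{\mathcal{L}}_{A}(\mathcal{H})$, whereas the lemma assumes merely $T\in{\mathcal{L}}_{A^{1/2}}(\mathcal{H})$; for such $T$ the adjoint $T^{\#_A}$ need not exist at all, so $\Re_{A}(e^{i\theta}T)$ is undefined and your framework does not reach the stated generality. In \cite{feki01} this is handled by representing $T$ as an operator $\widetilde{T}$ on the completion of $\mathcal{R}(A^{1/2})$, with $\|T\|_{A}=\|\widetilde{T}\|$, $\|T^{2}\|_{A}=\|\widetilde{T}^{2}\|$ and $w_{A}(T)=w(\widetilde{T})$.

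The genuine gap is that the main inequality \eqref{Aeq100} is simply not established: your maximizing-sequence computation yields only the root-mean-square bound $w_{A}(T)\le\bigl(\tfrac{1}{2}(\|T\|_{A}^{2}+\|T^{2}\|_{A})\bigr)^{1/2}$, which you correctly identify as strictly weaker, and you then defer ``the delicate step'' to ``the Kittaneh-type optimization in \cite{feki01}''. Since \eqref{Aeq100} \emph{is} Theorem~7 of \cite{feki01}, this is circular: the step you omit is the entire content of the lemma. Moreover, your diagnosis points in the wrong direction: the gap is not closed by a sharper estimate at an (almost-)maximizing vector, but by an algebraic, global argument. In the case your setup covers ($T\in{\mathcal{L}}_{A}(\mathcal{H})$) it runs as follows. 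Put $S_{\theta}=e^{i\theta}T+e^{-i\theta}T^{\#_A}$; both $S_{\theta}$ and $S_{\theta}^{2}$ are $A$-selfadjoint, so Lemma~\ref{ll2020} and \eqref{newrad} give $\|S_{\theta}\|_{A}^{2}=r_{A}(S_{\theta})^{2}=r_{A}(S_{\theta}^{2})=\|S_{\theta}^{2}\|_{A}$, and expanding the square, $\|S_{\theta}\|_{A}^{2}\le 2\|T^{2}\|_{A}+\|TT^{\#_A}+T^{\#_A}T\|_{A}$. The decisive ingredient is the bound
\begin{align*}
\|TT^{\#_A}+T^{\#_A}T\|_{A}\le\|T\|_{A}^{2}+\|T^{2}\|_{A},
\end{align*}
proved by the factor-and-commute device that this paper itself uses in Theorem~\ref{Athm103}: write $\begin{bmatrix} TT^{\#_A}+T^{\#_A}T & 0\\ 0&0\end{bmatrix}=\begin{bmatrix} T & T^{\#_A}\\ 0&0\end{bmatrix}\begin{bmatrix} T^{\#_A} & 0\\ T & 0\end{bmatrix}$, swap the factors using $r_{\mathbb{A}}(XY)=r_{\mathbb{A}}(YX)$ from \eqref{commut} to get $\begin{bmatrix} T^{\#_A}T & (T^{\#_A})^{2}\\ T^{2} & TT^{\#_A}\end{bmatrix}$, and dominate its $\mathbb{A}$-spectral radius via Lemma~\ref{lm5} and \eqref{ineq0} by $r\left(\begin{bmatrix}\|T\|_{A}^{2} & \|T^{2}\|_{A}\\ \|T^{2}\|_{A} & \|T\|_{A}^{2}\end{bmatrix}\right)=\|T\|_{A}^{2}+\|T^{2}\|_{A}$. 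This yields $4w_{A}^{2}(T)\le\|T\|_{A}^{2}+3\|T^{2}\|_{A}\le\left(\|T\|_{A}+\|T^{2}\|_{A}^{1/2}\right)^{2}$, the last step because $\|T^{2}\|_{A}\le\|T\|_{A}\|T^{2}\|_{A}^{1/2}$. Note how the improvement over your RMS bound comes from commuting operator-matrix factors inside a spectral radius, i.e.\ from the algebra of $T$ acting on the whole space---precisely the global input your own analysis of the extremal case $AT^{2}=O$ shows a single-vector estimate cannot supply.
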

\begin{lemma}\label{ll2020}\textnormal{[Corollary 3, \cite{feki01}]}
	Let $T\in{\mathcal{L}}(\mathcal{H})$ is an $A$-self-adjoint operator. Then,
	\begin{equation*}
	\|T\|_{A}=w_A(T)=r_A(T).
	\end{equation*}
\end{lemma}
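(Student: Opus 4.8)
The plan is to prove the two equalities separately, exploiting the fact---recorded in the excerpt immediately after \eqref{ineq1}---that $w_A(T)=\|T\|_A$ whenever $T$ is $A$-selfadjoint. This settles the first equality at once, so the entire task reduces to proving $r_A(T)=\|T\|_A$. The easy half is immediate: since an $A$-selfadjoint $T$ lies in $\mathcal{L}_A(\mathcal{H})\subseteq\mathcal{L}_{A^{1/2}}(\mathcal{H})$, the radius $r_A(T)$ is well defined, and taking $n=1$ in \eqref{newrad} gives $r_A(T)\le\|T\|_A$. The whole content is therefore the reverse inequality, and its engine is the single power identity $\|T^2\|_A=\|T\|_A^2$.

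To establish that identity I would use the hypothesis $AT=T^{*}A$ to compute, for every $x\in\mathcal{H}$,
\[
\|Tx\|_A^{2}=\langle ATx,Tx\rangle=\langle T^{*}ATx,x\rangle=\langle AT^{2}x,x\rangle=\langle T^{2}x,x\rangle_A,
\]
where the third step uses $T^{*}AT=(T^{*}A)T=(AT)T=AT^{2}$. Two consequences follow. First, $\langle T^{2}x,x\rangle_A=\|Tx\|_A^{2}\ge 0$, so $AT^{2}\ge 0$, i.e. $T^{2}$ is $A$-positive; second, a direct manipulation of $AT=T^{*}A$ gives $AT^{2}=(T^{2})^{*}A$, so $T^{2}$ is itself $A$-selfadjoint. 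Invoking the norm formula for $A$-positive operators quoted in the excerpt then yields
\[
\|T^{2}\|_A=\sup_{\|x\|_A=1}\langle T^{2}x,x\rangle_A=\sup_{\|x\|_A=1}\|Tx\|_A^{2}=\|T\|_A^{2}.
\]

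Finally I would iterate. Because $T^{2}$ is again $A$-selfadjoint, the same identity applies to it, and an induction produces $\|T^{2^{k}}\|_A=\|T\|_A^{2^{k}}$ for all $k\ge 1$. Since the limit in \eqref{newrad} is known to exist, it may be evaluated along the subsequence $n=2^{k}$, giving
\[
r_A(T)=\lim_{k\to\infty}\|T^{2^{k}}\|_A^{1/2^{k}}=\|T\|_A,
\]
which together with $w_A(T)=\|T\|_A$ proves that all three quantities coincide. I expect the only genuine step to be the identity $\|T^{2}\|_A=\|T\|_A^{2}$: everything hinges on recognizing that $A$-selfadjointness forces $T^{2}$ to be $A$-positive, after which the $A$-positive norm formula does the real work and the passage to $r_A$ is a routine subsequence argument.
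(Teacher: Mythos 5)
Your proposal is correct, but be aware that there is no in-paper proof to compare it with: the paper states Lemma~\ref{ll2020} without proof, importing it as Corollary~3 of \cite{feki01}. In that reference the equalities are obtained by quite different machinery, namely the canonical representation of an $A$-bounded operator $T$ as a bounded operator $\tilde{T}$ on the Hilbert space $\mathbf{R}(A^{1/2})$ built on the range of $A^{1/2}$, under which $\|T\|_A$, $w_A(T)$ and $r_A(T)$ correspond to $\|\tilde{T}\|$, $w(\tilde{T})$ and $r(\tilde{T})$, and $A$-selfadjointness of $T$ corresponds to selfadjointness of $\tilde{T}$; the classical equalities for selfadjoint operators then transfer back. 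Your route is genuinely different and more self-contained, a Gelfand-type argument using only facts quoted in the present paper: Zamani's equality $w_A(T)=\|T\|_A$ for $A$-selfadjoint $T$ (stated after \eqref{ineq1}) settles the first equality; $r_A(T)\le\|T\|_A$ is the case $n=1$ of \eqref{newrad}; and your key identity $\|T^2\|_A=\|T\|_A^2$ is correctly derived, since $T^*AT=(AT)T=AT^2$ gives $\langle T^2x,x\rangle_A=\|Tx\|_A^2\ge 0$ while $AT^2=(T^2)^*A$ shows $AT^2$ is selfadjoint, so $T^2$ is $A$-positive and the quoted norm formula for $A$-positive operators applies; iterating (since $T^2$ is again $A$-selfadjoint) and evaluating the limit in \eqref{newrad} along the subsequence $n=2^k$ finishes the argument. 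One point you should make explicit: your computation implicitly uses $\|T\|_A=\sup\{\|Tx\|_A : x\in\mathcal{H},\ \|x\|_A=1\}$, whereas the paper defines $\|T\|_A$ by a supremum over $x\in\overline{\mathcal{R}(A)}$ only; the two agree for $A$-bounded $T$ because such operators map $\mathcal{N}(A)$ into $\mathcal{N}(A)$ and the seminorm $\|\cdot\|_A$ ignores $\mathcal{N}(A)$-components, but this identification deserves a sentence in a complete write-up.
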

\begin{lemma}\label{lm5}\textnormal{[Lemma 6, \cite{bfeki}]}
	Let $T=\begin{pmatrix}
	T_1&T_2 \\
	T_3&T_4
	\end{pmatrix}$ be such that $T_1,T_2,T_3,T_4\in {\mathcal{L}}_{A^{1/2}}(\mathcal{H})$. Then, $T\in {\mathcal{L}}_{\mathbb{A}^{1/2}}(\mathcal{H}\oplus \mathcal{H})$ and
	$$ r_\mathbb{A}\left(T\right)\leq r\left[\begin{pmatrix}
	\|T_1\|_A & \|T_2\|_A \\
	\|T_3\|_A & \|T_4\|_A
	\end{pmatrix}\right].$$
\end{lemma}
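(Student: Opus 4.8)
The plan is to dominate the block operator $T$ by the scalar nonnegative matrix $M:=\begin{pmatrix}\|T_1\|_A&\|T_2\|_A\\ \|T_3\|_A&\|T_4\|_A\end{pmatrix}$ entrywise, and then to propagate this domination through powers and the limit defining $r_{\mathbb{A}}$ as in \eqref{newrad}. For any block operator $S=\begin{pmatrix}S_1&S_2\\ S_3&S_4\end{pmatrix}$ whose entries lie in ${\mathcal{L}}_{A^{1/2}}(\mathcal{H})$, set $\widehat{S}:=\begin{pmatrix}\|S_1\|_A&\|S_2\|_A\\ \|S_3\|_A&\|S_4\|_A\end{pmatrix}$, a matrix with nonnegative entries, so that $\widehat{T}=M$. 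I would base the argument on three facts: (i) the norm-domination estimate $\|S\|_{\mathbb{A}}\le\|\widehat{S}\|$, where $\|\cdot\|$ denotes the Euclidean operator norm of $2\times2$ matrices; (ii) the submultiplicativity $\widehat{ST}\le\widehat{S}\,\widehat{T}$ in the entrywise order; and (iii) the monotonicity of the matrix spectral norm and spectral radius under the entrywise order on nonnegative matrices. Granting these, an induction using (ii) gives $\widehat{T^n}\le\widehat{T}^{\,n}=M^n$ entrywise, whence $\|T^n\|_{\mathbb{A}}\le\|\widehat{T^n}\|\le\|M^n\|$ by (i) and (iii); taking $n$-th roots, letting $n\to\infty$, and invoking \eqref{newrad} together with Gelfand's formula $r(M)=\lim_n\|M^n\|^{1/n}$ yields $r_{\mathbb{A}}(T)\le r(M)$, which is the assertion.

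For (i) I would argue directly on $\mathcal{H}\oplus\mathcal{H}$. Since $\mathbb{A}=\begin{pmatrix}A&O\\ O&A\end{pmatrix}$, we have $\|(x,y)\|_{\mathbb{A}}^2=\|x\|_A^2+\|y\|_A^2$, and $S(x,y)=(S_1x+S_2y,\,S_3x+S_4y)$ gives $\|S(x,y)\|_{\mathbb{A}}^2=\|S_1x+S_2y\|_A^2+\|S_3x+S_4y\|_A^2$. The triangle inequality for $\|\cdot\|_A$ and the bound $\|S_ix\|_A\le\|S_i\|_A\|x\|_A$ (valid for every $A$-bounded operator, since the $\mathcal{N}(A)$-component of $x$ is annihilated in $A$-seminorm) show that each of the two components is dominated by the corresponding entry of the vector $\widehat{S}\binom{\|x\|_A}{\|y\|_A}$. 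As all these numbers are nonnegative, squaring and adding gives $\|S(x,y)\|_{\mathbb{A}}\le\big\|\widehat{S}\binom{\|x\|_A}{\|y\|_A}\big\|_2\le\|\widehat{S}\|\,\|(x,y)\|_{\mathbb{A}}$, and taking the supremum over $\|(x,y)\|_{\mathbb{A}}=1$ proves (i). Specialising to $S=T$ gives $\|T\|_{\mathbb{A}}\le\|M\|<\infty$, which is exactly $\mathbb{A}$-boundedness of $T$; this settles the membership claim $T\in{\mathcal{L}}_{\mathbb{A}^{1/2}}(\mathcal{H}\oplus\mathcal{H})$. Fact (ii) is routine: it follows entry by entry from the triangle inequality and the submultiplicativity $\|PQ\|_A\le\|P\|_A\|Q\|_A$ of the $A$-seminorm, e.g. $\|(ST)_{11}\|_A=\|S_1T_1+S_2T_3\|_A\le\|S_1\|_A\|T_1\|_A+\|S_2\|_A\|T_3\|_A=(\widehat{S}\,\widehat{T})_{11}$, and the entrywise order is preserved under multiplication by a nonnegative matrix, making the induction for $\widehat{T^n}\le M^n$ go through.

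The step I expect to be the genuine obstacle is (iii): converting the entrywise inequality $0\le\widehat{T^n}\le M^n$ into the numerical inequality $\|\widehat{T^n}\|\le\|M^n\|$. For the spectral norm this is where nonnegativity is essential, and I would invoke Perron--Frobenius theory: if $0\le B\le C$ entrywise, then $0\le B^{*}B\le C^{*}C$ entrywise, and the spectral radius of a nonnegative matrix is monotone for the entrywise order, so that $\|B\|^2=r(B^{*}B)\le r(C^{*}C)=\|C\|^2$. If one wishes to bypass Perron--Frobenius, the same conclusion is obtained by carrying out (i) and the whole chain with the maximal-row-sum norm in place of the spectral norm; that norm is trivially monotone in the entrywise order, and since all norms on $2\times2$ matrices are equivalent the equivalence constants contribute a factor tending to $1$ after the $n$-th root, so that $\lim_n\|M^n\|^{1/n}=r(M)$ is unaffected. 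With (iii) in hand, the remaining passages—the induction and the Gelfand limit—are entirely routine.
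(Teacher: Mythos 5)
The paper does not actually prove Lemma~\ref{lm5}: it is imported verbatim from [Lemma 6, \cite{bfeki}], so the only meaningful comparison is with that cited source rather than with an in-paper argument. Your proof is correct, and it follows essentially the same (standard) route used in the cited literature: entrywise domination $\widehat{T^n}\le M^n$ via the triangle inequality and submultiplicativity of $\|\cdot\|_A$, the norm bound $\|T^n\|_{\mathbb{A}}\le\|\widehat{T^n}\|\le\|M^n\|$, and then the formula \eqref{newrad} on one side and Gelfand's formula $r(M)=\lim_n\|M^n\|^{1/n}$ on the other after taking $n$-th roots. You also handle the two genuine subtleties correctly: the inequality $\|Sx\|_A\le\|S\|_A\|x\|_A$ for \emph{arbitrary} $x\in\mathcal{H}$ (not merely $x\in\overline{\mathcal{R}(A)}$), justified by the fact that $A$-boundedness annihilates the $\mathcal{N}(A)$-component in $\|\cdot\|_A$, which is also what makes the membership claim $T\in{\mathcal{L}}_{\mathbb{A}^{1/2}}(\mathcal{H}\oplus\mathcal{H})$ an immediate consequence of your step (i); and the monotonicity of the spectral norm under the entrywise order on nonnegative matrices, which is indeed a Perron--Frobenius fact (and, as you note, can be bypassed with the row-sum norm since the equivalence constants disappear under $n$-th roots).
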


The following lemma is already proved by Bhunia et al. \cite{PINTU} for the case strictly positive operator $A$. Very recentely the same result proved by Rout et al. \cite{Nirmal2} without the condition $A>0$  is stated next for our purpose.

 \begin{lemma}\label{lem0001}\textnormal{[Lemma 2.4, \cite{Nirmal2}]} 
Let $T_1, T_2\in {\mathcal{L}}_A(\mathcal{H}).$ Then 
\begin{enumerate}
        \item [\textnormal{(i)}]
     $w_{\mathbb{A}}\left(\begin{bmatrix}
    T_1 & O\\
    O & T_2
    \end{bmatrix}\right)= \max\{w_A(T_1), w_A(T_2)\}.$\\
    \item [\textnormal{(ii)}] $w_{\mathbb{A}}\left(\begin{bmatrix}
    O & T_1\\
     T_2 & O
    \end{bmatrix}\right)=w_{\mathbb{A}}\left(\begin{bmatrix}
    O & T_2\\
     T_1 & O
    \end{bmatrix}\right).$\\
    \item [\textnormal{(iii)}] $w_{\mathbb{A}}\left(\begin{bmatrix}
    O & T_1\\
     e^{i\theta}T_2 & O
    \end{bmatrix}\right)=w_{\mathbb{A}}\left(\begin{bmatrix}
    O & T_1\\
     T_2 & O
    \end{bmatrix}\right)$ for~any~$\theta\in\mathbb{R}$.\\
    \item [\textnormal{(iv)}]  $w_{\mathbb{A}}\left(\begin{bmatrix}
    T_1 & T_2\\
     T_2 & T_1
    \end{bmatrix}\right)=\max\{w_A(T_1+T_2),w_A(T_1-T_2)\}.$
     In particular, $w_{\mathbb{A}}\left(\begin{bmatrix}
    O & T_2\\
     T_2 & O
    \end{bmatrix}\right)=w_A(T_2).$
\end{enumerate}
\end{lemma}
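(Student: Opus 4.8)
The plan is to reduce every part to the variational definition of the $\mathbb{A}$-numerical radius. Writing a generic vector of $\mathcal{H}\oplus\mathcal{H}$ as $\mathbf{x}=(x_1,x_2)$ with $\|\mathbf{x}\|_{\mathbb{A}}^2=\|x_1\|_A^2+\|x_2\|_A^2=1$, and expanding $\langle T\mathbf{x},\mathbf{x}\rangle_{\mathbb{A}}$ in terms of the component $A$-inner products, each identity becomes a statement about the supremum of an explicit expression over this $\mathbb{A}$-unit sphere. Parts (ii) and (iii) will then be pure symmetry arguments (relabeling and phase rotation of the variables), part (i) a triangle-inequality estimate together with a sharpness check, and part (iv) a congruence by an $\mathbb{A}$-unitary that block-diagonalizes the matrix, after which part (i) finishes the job. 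Throughout I would first note that $T_1,T_2\in{\mathcal{L}}_A(\mathcal{H})$ forces each block matrix into ${\mathcal{L}}_{\mathbb{A}}(\mathcal{H}\oplus\mathcal{H})$, so the formula \eqref{weak} is available.

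For (i), with $T=\mathrm{diag}(T_1,T_2)$ one has $\langle T\mathbf{x},\mathbf{x}\rangle_{\mathbb{A}}=\langle T_1x_1,x_1\rangle_A+\langle T_2x_2,x_2\rangle_A$. Bounding each term by $|\langle T_jx_j,x_j\rangle_A|\le w_A(T_j)\|x_j\|_A^2$ and using $\|x_1\|_A^2+\|x_2\|_A^2=1$ gives the upper bound $\max\{w_A(T_1),w_A(T_2)\}$; the reverse follows by restricting to vectors $(x_1,0)$ and $(0,x_2)$, which reproduces $w_A(T_1)$ and $w_A(T_2)$ separately. For (ii), the off-diagonal matrix gives $\langle T\mathbf{x},\mathbf{x}\rangle_{\mathbb{A}}=\langle T_1x_2,x_1\rangle_A+\langle T_2x_1,x_2\rangle_A$; the substitution $y_1=x_2$, $y_2=x_1$ — legitimate because the constraint $\|x_1\|_A^2+\|x_2\|_A^2=1$ is symmetric — turns this into the corresponding expression for the matrix with $T_1,T_2$ interchanged, so the two suprema coincide. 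For (iii), I would record the key identity: replacing $x_2$ by $e^{i\psi}x_2$ (which preserves the constraint) sends $\langle T_1x_2,x_1\rangle_A+e^{i\theta}\langle T_2x_1,x_2\rangle_A$ to $e^{i\psi}\bigl(\langle T_1x_2,x_1\rangle_A+e^{i(\theta-2\psi)}\langle T_2x_1,x_2\rangle_A\bigr)$, whose modulus is independent of the outer phase; choosing $\psi=\theta/2$ removes the internal phase $e^{i\theta}$, so the supremum is unchanged.

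For (iv), let $V=\frac{1}{\sqrt{2}}\begin{bmatrix} I & I \\ I & -I\end{bmatrix}$. A short computation gives $VTV=\mathrm{diag}(T_1+T_2,\,T_1-T_2)$ when $T=\begin{bmatrix} T_1 & T_2 \\ T_2 & T_1\end{bmatrix}$. I would first check that $V$ is $\mathbb{A}$-unitary: the equality $\|V\mathbf{x}\|_{\mathbb{A}}=\|\mathbf{x}\|_{\mathbb{A}}$ is exactly the parallelogram law for $\|\cdot\|_A$, and since $V$ commutes with $\mathbb{A}$ one gets $V^{\#_{\mathbb{A}}}=P_{\overline{\mathcal{R}(\mathbb{A})}}V$. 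Then, invoking the weak $\mathbb{A}$-unitary invariance \eqref{weak} together with the elementary fact that $w_{\mathbb{A}}(P_{\overline{\mathcal{R}(\mathbb{A})}}S)=w_{\mathbb{A}}(S)$ (because $\mathbb{A}P_{\overline{\mathcal{R}(\mathbb{A})}}=\mathbb{A}$), I would conclude $w_{\mathbb{A}}(T)=w_{\mathbb{A}}(V^{\#_{\mathbb{A}}}TV)=w_{\mathbb{A}}(\mathrm{diag}(T_1+T_2,T_1-T_2))$, and part (i) finishes. The ``in particular'' case is $T_1=O$, using $w_A(-T_2)=w_A(T_2)$.

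The main obstacle is the gap between the $\#_{\mathbb{A}}$-adjoint and the ordinary adjoint caused by $A$ possibly being non-injective or having non-closed range: $V^{\#_{\mathbb{A}}}$ is not literally $V$ but $P_{\overline{\mathcal{R}(\mathbb{A})}}V$, so I must (a) verify that $V$ genuinely satisfies the definition of an $\mathbb{A}$-unitary and (b) justify dropping the projection inside $w_{\mathbb{A}}$. Both reduce to the identities $\mathbb{A}P_{\overline{\mathcal{R}(\mathbb{A})}}=P_{\overline{\mathcal{R}(\mathbb{A})}}\mathbb{A}=\mathbb{A}$ and the fact that the $\mathbb{A}$-seminorm sees only the $\overline{\mathcal{R}(\mathbb{A})}$-component; once these are in hand, everything else is routine variational bookkeeping.
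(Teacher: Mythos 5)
The paper itself contains no proof of this lemma: it is quoted verbatim from Lemma 2.4 of \cite{Nirmal2} (a communicated manuscript), so the only comparison available is with the techniques this paper uses elsewhere. Your proof is correct and makes the statement self-contained. Parts (i)--(iii) work exactly as you describe as variational arguments on the $\mathbb{A}$-unit sphere; the only step you should make explicit is the homogeneous bound $|\langle T_jx_j,x_j\rangle_A|\le w_A(T_j)\|x_j\|_A^2$ in (i), which in the seminorm setting needs the one-line observation that $\|x_j\|_A=0$ forces $Ax_j=0$ (since $A\geq 0$), hence $\langle T_jx_j,x_j\rangle_A=\langle T_jx_j,Ax_j\rangle=0$; with that, your upper and lower estimates match, and the substitutions in (ii) and (iii) are genuine bijections of the unit sphere, so those parts are clean. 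For (iv), your route --- verifying that $V=\tfrac{1}{\sqrt2}\begin{bmatrix} I& I\\ I&-I\end{bmatrix}$ is $\mathbb{A}$-unitary, computing $V^{\#_\mathbb{A}}=P_{\overline{\mathcal{R}(\mathbb{A})}}V$, invoking the weak invariance \eqref{weak}, and discarding the projection via $\mathbb{A}P_{\overline{\mathcal{R}(\mathbb{A})}}=\mathbb{A}$ --- is precisely the technique this paper itself employs in the proof of Theorem \ref{Athm100} for the matrix $\tfrac{1}{\sqrt{2}}\begin{bmatrix} I&-I\\ I& I\end{bmatrix}$, where $U^{\#_\mathbb{A}}$ is likewise computed as $\mathbb{A}^\dagger U^*\mathbb{A}$ using $A^\dagger A=P_{\overline{\mathcal{R}(A)}}$; so your argument is fully in the spirit of the source even though the source omits the proof.

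Two remarks on what your approach buys. First, the congruence in (iv) could bypass \eqref{weak} entirely: since $V^2=I$, $V^*=V$ and $V\mathbb{A}=\mathbb{A}V$, one has $\langle TV\mathbf{y},V\mathbf{y}\rangle_\mathbb{A}=\langle VTV\mathbf{y},\mathbf{y}\rangle_\mathbb{A}$, and $V$ is a seminorm-preserving bijection of the unit sphere, so $w_\mathbb{A}(T)=w_\mathbb{A}(VTV)$ directly, without any discussion of $V^{\#_\mathbb{A}}$ or the projection. Second, your proofs of (i)--(iii) never use the hypothesis $T_1,T_2\in\mathcal{L}_A(\mathcal{H})$, since $w_{\mathbb{A}}$ as defined in \eqref{eqn1a} makes sense for arbitrary bounded operators; only part (iv), as you set it up through \eqref{weak} and the $\#_{\mathbb{A}}$-calculus of Lemma \ref{lemma1}, needs that membership. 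So your argument in fact establishes (i)--(iii) in slightly greater generality than the statement quoted from \cite{Nirmal2}.
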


 The following Lemma is proved by Rout et al. \cite{Nirmal2}.
\begin{lemma}\label{l001}\textnormal{[Lemma 2.2, \cite{Nirmal2}]}
	Let  $T_1, T_2, T_3, T_4\in {\mathcal{L}}_A(\mathcal{H}).$  Then
	\begin{enumerate}
		\item [\textnormal{(i)}] $w_\mathbb{A}\left(\begin{bmatrix}
		T_1 & O\\
		O & T_4
		\end{bmatrix}\right)\leq w_\mathbb{A}\left(\begin{bmatrix}
		T_1 & T_2\\
		T_3 & T_4
		\end{bmatrix}\right).$
		\item [\textnormal{(ii)}] $w_\mathbb{A}\left(\begin{bmatrix}
		O & T_2\\
		T_3 & O
		\end{bmatrix}\right)\leq w_\mathbb{A}\left(\begin{bmatrix}
		T_1 & T_2\\
		T_3 & T_4
		\end{bmatrix}\right).$
	\end{enumerate}
\end{lemma}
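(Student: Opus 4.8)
The plan is to obtain both inequalities from a single phase-averaging (pinching) argument carried out directly on the $\mathbb{A}$-sesquilinear form, which avoids computing $U^{\#_{\mathbb{A}}}$ explicitly. Writing $T=\begin{bmatrix} T_1 & T_2\\ T_3 & T_4\end{bmatrix}$, each $T_i\in{\mathcal{L}}_A(\mathcal{H})$ gives $\mathcal{R}(T_i^*A)\subseteq\mathcal{R}(A)$, whence $\mathcal{R}(T^*\mathbb{A})\subseteq\mathcal{R}(\mathbb{A})$ and $T\in{\mathcal{L}}_{\mathbb{A}}(\mathcal{H}\oplus\mathcal{H})$, so $w_{\mathbb{A}}(T)$ is defined. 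Recall that for $\xi=(x_1,x_2)$ one has $\|\xi\|_{\mathbb{A}}^2=\|x_1\|_A^2+\|x_2\|_A^2$ and $\langle S\xi,\xi\rangle_{\mathbb{A}}$ is computed block-wise in the two $A$-inner products.

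The key device is to rotate only the second coordinate. Fix $\xi=(x_1,x_2)$ with $\|\xi\|_{\mathbb{A}}=1$, and for $\theta\in\mathbb{R}$ set $\xi_\theta=(x_1,e^{i\theta}x_2)$; since $\|e^{i\theta}x_2\|_A=\|x_2\|_A$ we still have $\|\xi_\theta\|_{\mathbb{A}}=1$. A direct expansion yields
\begin{equation*}
\langle T\xi_\theta,\xi_\theta\rangle_{\mathbb{A}}=\langle T_1x_1,x_1\rangle_A+\langle T_4x_2,x_2\rangle_A+e^{i\theta}\langle T_2x_2,x_1\rangle_A+e^{-i\theta}\langle T_3x_1,x_2\rangle_A.
\end{equation*}
Taking $\theta\in\{0,\pi\}$, the half-sum of the two resulting quantities equals $\left\langle \begin{bmatrix} T_1 & O\\ O & T_4\end{bmatrix}\xi,\xi\right\rangle_{\mathbb{A}}$ and the half-difference equals $\left\langle \begin{bmatrix} O & T_2\\ T_3 & O\end{bmatrix}\xi,\xi\right\rangle_{\mathbb{A}}$.

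Now I would apply the triangle inequality: since $\xi_0$ and $\xi_\pi$ are $\mathbb{A}$-unit vectors, each of the two expressions above is bounded in modulus by $\tfrac12\big(|\langle T\xi_0,\xi_0\rangle_{\mathbb{A}}|+|\langle T\xi_\pi,\xi_\pi\rangle_{\mathbb{A}}|\big)\le w_{\mathbb{A}}(T)$. Taking the supremum over all $\mathbb{A}$-unit $\xi$ gives (i) from the half-sum and (ii) from the half-difference. I expect no genuine obstacle: the only subtlety is that $w_{\mathbb{A}}$ is merely a seminorm when $A$ is not injective, but the argument uses only the triangle inequality and $|e^{i\theta}|=1$, both valid in this generality.

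As an alternative one may invoke the weak $\mathbb{A}$-unitary invariance \eqref{weak} with $U=\begin{bmatrix} I & O\\ O & -I\end{bmatrix}$, which is readily checked to be $\mathbb{A}$-unitary, and then apply subadditivity of $w_{\mathbb{A}}$ to $\tfrac12\big(T\pm U^{\#_{\mathbb{A}}}TU\big)$. The only bookkeeping there is that $U^{\#_{\mathbb{A}}}=\mathbb{A}^{\dagger}U^*\mathbb{A}=P_{\overline{\mathcal{R}(\mathbb{A})}}U$ carries a projection, but since $\mathbb{A}\,P_{\overline{\mathcal{R}(\mathbb{A})}}=\mathbb{A}$ this projection leaves every $\mathbb{A}$-form $\langle\,\cdot\,\xi,\xi\rangle_{\mathbb{A}}$ unchanged; the direct form-level computation above is exactly what sidesteps this.
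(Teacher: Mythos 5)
Your proof is correct. Note first that the paper itself gives no proof of this lemma: it is imported verbatim from \cite[Lemma 2.2]{Nirmal2} (``The following Lemma is proved by Rout et al.''), so there is no in-paper argument to compare against, and your proposal supplies a self-contained proof where the paper has none. The substance checks out: the membership $T\in{\mathcal{L}}_{\mathbb{A}}(\mathcal{H}\oplus\mathcal{H})$ follows, as you say, from $\mathcal{R}(T^{*}\mathbb{A})\subseteq\mathcal{R}(A)\oplus\mathcal{R}(A)=\mathcal{R}(\mathbb{A})$; the expansion of $\langle T\xi_{\theta},\xi_{\theta}\rangle_{\mathbb{A}}$ is right; the $\theta\in\{0,\pi\}$ half-sum and half-difference are precisely the $\mathbb{A}$-forms of $\begin{bmatrix} T_1 & O\\ O & T_4\end{bmatrix}$ and $\begin{bmatrix} O & T_2\\ T_3 & O\end{bmatrix}$; and since $\xi\mapsto\xi_{\theta}$ preserves the $\mathbb{A}$-seminorm, the triangle inequality followed by a supremum over $\mathbb{A}$-unit vectors yields (i) and (ii) in one stroke, with no injectivity or closed-range hypothesis on $A$.

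Comparing routes: the standard argument in this circle of papers --- and the style this paper itself adopts when proving its own matrix results, e.g.\ Theorem \ref{Athm100} --- is your ``alternative'': conjugate by the $\mathbb{A}$-unitary $U=\begin{bmatrix} I & O\\ O & -I\end{bmatrix}$, invoke the weak $\mathbb{A}$-unitary invariance \eqref{weak}, and apply subadditivity of $w_{\mathbb{A}}$ to $\frac{1}{2}\left(T\pm U^{\#_{\mathbb{A}}}TU\right)$. That route forces one to compute $U^{\#_{\mathbb{A}}}=\mathbb{A}^{\dagger}U^{*}\mathbb{A}$, which drags in $A^{\dagger}$ and $P_{\overline{\mathcal{R}(A)}}$, and then to argue that these projections wash out at the level of $\mathbb{A}$-forms (via $AP_{\overline{\mathcal{R}(A)}}=A$) so that $\frac{1}{2}\left(T+U^{\#_{\mathbb{A}}}TU\right)$ and the diagonal part have the same $\mathbb{A}$-numerical radius --- exactly the bookkeeping visible in the paper's proof of Theorem \ref{Athm100}. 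Your primary, form-level argument buys the avoidance of all of that: it never touches $A^{\dagger}$ or projections, uses nothing beyond the definition \eqref{eqn1a} and $|e^{i\theta}|=1$, and is therefore both more elementary and transparently valid for a merely positive, possibly non-injective $A$.
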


\begin{lemma}\label{lemma1}\textnormal{[Lemma 2.4 and Lemma 3.1, \cite{feki04,bfeki}]}
	Let $T_1, T_4\in{\mathcal{L}}_{A^{1/2}}(\mathcal{H})$. Then, the following assertions hold
	\begin{itemize}
		\item[(i)] ${\left\|\begin{pmatrix}
			T_1 & 0 \\
			0 & T_4
			\end{pmatrix}\right\|}_{\mathbb{A}} = {\left\|\begin{pmatrix}
			0 & T_1 \\
			T_4 & 0
			\end{pmatrix}\right\|}_{\mathbb{A}} = \max\big\{{\|T_1\|}_{A}, {\|T_4\|}_{A}\big\}$.
		\item[(ii)] If $T_1, T_2, T_3, T_4\in{\mathcal{L}}_{A}(\mathcal{H})$, then ${\begin{pmatrix}
			T_1 & T_2 \\
			T_3 & T_4
			\end{pmatrix}}^{\#_A} = \begin{pmatrix}
		T_1^{\#_A} & T_3^{\#_A} \\
		T_2^{\#_A} & T_4^{\#_A}
		\end{pmatrix}$.
	\end{itemize}
\end{lemma}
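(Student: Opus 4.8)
The plan is to treat the two parts separately, each reducing to the defining formulas already recorded in the Introduction. For part (i), the starting point is that the seminorm induced by $\mathbb{A}=\mathrm{diag}(A,A)$ on $\mathcal{H}\oplus\mathcal{H}$ splits as $\|(x,y)\|_{\mathbb{A}}^2=\|x\|_A^2+\|y\|_A^2$, since $\langle\mathbb{A}(x,y),(x,y)\rangle=\langle Ax,x\rangle+\langle Ay,y\rangle$. Applying the block operator to $(x,y)$ and using this identity, I would compute
\[
\left\|\begin{pmatrix}T_1 & 0\\ 0 & T_4\end{pmatrix}(x,y)\right\|_{\mathbb{A}}^2=\|T_1x\|_A^2+\|T_4y\|_A^2,
\]
and then invoke the submultiplicativity $\|T_ix\|_A\le\|T_i\|_A\|x\|_A$ together with $\|x\|_A^2+\|y\|_A^2=1$ to bound this above by $\max\{\|T_1\|_A^2,\|T_4\|_A^2\}$. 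The reverse inequality follows by feeding in test vectors with one coordinate zero and the other nearly maximizing $\|T_i\cdot\|_A/\|\cdot\|_A$ over $\overline{\mathcal{R}(A)}$. The off-diagonal case is identical: the map merely swaps which block acts on which coordinate, giving $\|T_1y\|_A^2+\|T_4x\|_A^2$, so the same two bounds apply verbatim.

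For part (ii), the whole computation rests on the reduced-solution formula $S^{\#_A}=A^\dagger S^*A$ recorded earlier. First I would establish that $\mathbb{A}^\dagger=\mathrm{diag}(A^\dagger,A^\dagger)$; this is immediate upon checking the four Moore--Penrose equations blockwise, since each reduces to the corresponding identity for $A$ and $A^\dagger$, with the projections $A^\dagger A$ and $AA^\dagger$ remaining self-adjoint on the direct sum. Writing $T=\begin{pmatrix}T_1 & T_2\\ T_3 & T_4\end{pmatrix}$, so that $T^*=\begin{pmatrix}T_1^* & T_3^*\\ T_2^* & T_4^*\end{pmatrix}$, the claim then falls out of the single block product
\[
T^{\#_{\mathbb{A}}}=\mathbb{A}^\dagger T^*\mathbb{A}=\begin{pmatrix}A^\dagger T_1^*A & A^\dagger T_3^*A\\ A^\dagger T_2^*A & A^\dagger T_4^*A\end{pmatrix}=\begin{pmatrix}T_1^{\#_A} & T_3^{\#_A}\\ T_2^{\#_A} & T_4^{\#_A}\end{pmatrix}.
\]

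The one genuinely nontrivial point is to verify that $T$ actually lies in $\mathcal{L}_{\mathbb{A}}(\mathcal{H}\oplus\mathcal{H})$, i.e.\ that $\mathcal{R}(T^*\mathbb{A})\subseteq\mathcal{R}(\mathbb{A})$, so that $T^{\#_{\mathbb{A}}}$ exists and the reduced-solution formula is legitimately applied. I expect this to be the main obstacle, and I would settle it through the Douglas range inclusion: each hypothesis $T_i\in{\mathcal{L}}_A(\mathcal{H})$ yields $\mathcal{R}(T_i^*A)\subseteq\mathcal{R}(A)$, and since every block of $T^*\mathbb{A}$ has the form $T_i^*A$, the inclusion lifts to the level of $\mathbb{A}$. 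Everything else in both parts is routine block algebra together with direct appeals to the definitions.
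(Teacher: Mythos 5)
The paper never proves this lemma: it is imported verbatim from the cited references (Feki, and Bhunia--Feki--Paul), so there is no in-paper proof to compare against. Your reconstruction is correct and is essentially the standard argument. Two small points are worth making explicit. In part (i), the supremum defining $\|\cdot\|_{\mathbb{A}}$ runs over $\overline{\mathcal{R}(\mathbb{A})}=\overline{\mathcal{R}(A)}\oplus\overline{\mathcal{R}(A)}$, so the bound $\|T_ix\|_A\le\|T_i\|_A\|x\|_A$ only ever needs to be invoked for $x\in\overline{\mathcal{R}(A)}$, where it is immediate from the paper's definition of $\|\cdot\|_A$ (for arbitrary $x\in\mathcal{H}$ one would additionally have to use $A$-boundedness to kill the $\mathcal{N}(A)$-component); your test vectors $(x,0)$, $(0,y)$ for the reverse inequality do lie in $\overline{\mathcal{R}(\mathbb{A})}$, so that direction is fine. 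In part (ii), your route --- check the four Moore--Penrose equations blockwise to get $\mathbb{A}^\dagger=\begin{pmatrix} A^\dagger & O\\ O & A^\dagger\end{pmatrix}$, verify $\mathcal{R}(T^*\mathbb{A})\subseteq\mathcal{R}(\mathbb{A})$ from the blockwise Douglas inclusions so that $T^{\#_\mathbb{A}}$ exists and equals $\mathbb{A}^\dagger T^*\mathbb{A}$, then multiply out --- is exactly the computation in the cited source, and indeed the paper itself repeats this very computation for the particular operator $U=\frac{1}{\sqrt{2}}\begin{pmatrix} I & -I\\ I & I\end{pmatrix}$ inside the proof of Theorem 3.1, so your proof is fully consistent with how the paper handles $\#_{\mathbb{A}}$ of block matrices.
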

In order to prove our main result the following identity is essential for our purpose. If $T\in{\mathcal{L}}_{A^{1/2}}(\mathcal{H})$ and
$ \begin{bmatrix}
	T & T\\
	-T & -T
\end{bmatrix}^2=\begin{bmatrix}
0 & 0\\
0 & 0
\end{bmatrix}$, so by \eqref{Aeq101} 

\begin{align}\label{Aeq102}
w_{\mathbb{A}}\left(\begin{bmatrix}
	T & T\\
-T & -T
\end{bmatrix}\right)=\frac{1}{2}\left\|\begin{bmatrix}
T & T\\
-T & -T
\end{bmatrix}\right\|_A=\|T\|_A.
\end{align}

\section{Results}
	
We will split our results into two subsections. The first part deals with ${\mathbb{A}}$-numerical radius of  $2\times 2$ operator matrices.  The second part concerns some upper bound for $A$ numerical radius inequalities. 

\subsection{Certain ${\mathbb{A}}$-numerical radius inequalities of operator matrices}
Here, we establish our main results dealing with different upper and lower bounds for ${\mathbb{A}}$-numerical radius of  $2\times 2$ block operator matrices. The very first result is stated next.
\begin{theorem}\label{Athm100}
Let	$T_2,T_3\in {\mathcal{L}}_A(\mathcal{H})$.  Then
	\begin{align*}
	w_{\mathbb{A}}\left(\begin{bmatrix}
	0 & T_2\\
	T_3 & 0
	\end{bmatrix}\right)\leq \min\left\{w_A(T_2), w_A(T_3)\right\}+\min\left\{\frac{\|T_2+T_3\|_A}{2}, \frac{\|T_2-T_3\|_A}{2}\right\}.
	\end{align*}
\end{theorem}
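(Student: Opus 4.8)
The plan is to bound the left-hand side by repeatedly using the subadditivity (triangle inequality) of the seminorm $w_{\mathbb{A}}(\cdot)$ after splitting the off-diagonal matrix into a \emph{symmetric} off-diagonal block, whose $\mathbb{A}$-numerical radius collapses to an ordinary $A$-numerical radius via Lemma~\ref{lem0001}(iv), plus a \emph{triangular nilpotent} block, whose $\mathbb{A}$-numerical radius is exactly half its norm by the nilpotency identity already exploited in \eqref{Aeq102}. Performing this split in four natural ways and taking the minimum of the resulting estimates will reproduce exactly the claimed bound.

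Concretely, first I would record the two ``$-$'' decompositions
\[
\begin{bmatrix} 0 & T_2 \\ T_3 & 0 \end{bmatrix}
= \begin{bmatrix} 0 & T_3 \\ T_3 & 0 \end{bmatrix} + \begin{bmatrix} 0 & T_2 - T_3 \\ 0 & 0 \end{bmatrix}
= \begin{bmatrix} 0 & T_2 \\ T_2 & 0 \end{bmatrix} + \begin{bmatrix} 0 & 0 \\ T_3 - T_2 & 0 \end{bmatrix},
\]
and the two ``$+$'' decompositions
\[
\begin{bmatrix} 0 & T_2 \\ T_3 & 0 \end{bmatrix}
= \begin{bmatrix} 0 & -T_3 \\ T_3 & 0 \end{bmatrix} + \begin{bmatrix} 0 & T_2 + T_3 \\ 0 & 0 \end{bmatrix}
= \begin{bmatrix} 0 & T_2 \\ -T_2 & 0 \end{bmatrix} + \begin{bmatrix} 0 & 0 \\ T_2 + T_3 & 0 \end{bmatrix}.
\]
Applying $w_{\mathbb{A}}(X+Y)\le w_{\mathbb{A}}(X)+w_{\mathbb{A}}(Y)$ to each, the symmetric summands evaluate to $w_A(T_3)$, $w_A(T_2)$, $w_A(T_3)$, $w_A(T_2)$: the first two directly by Lemma~\ref{lem0001}(iv), and the sign-twisted pair $\begin{bmatrix} 0 & -T_3 \\ T_3 & 0 \end{bmatrix}$ and $\begin{bmatrix} 0 & T_2 \\ -T_2 & 0 \end{bmatrix}$ by absorbing the phase $e^{i\pi}=-1$ through parts (ii)–(iii) of Lemma~\ref{lem0001}.

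For each triangular summand $N$ I would invoke the nilpotency identity: since $N^2=0$ we have $\mathbb{A}N^2=0$, so \eqref{Aeq101} gives $w_{\mathbb{A}}(N)=\tfrac12\|N\|_{\mathbb{A}}$, while Lemma~\ref{lemma1}(i) evaluates $\|N\|_{\mathbb{A}}$ as $\|T_2-T_3\|_A$ for the first pair and $\|T_2+T_3\|_A$ for the second. This yields the four upper bounds $w_A(T_3)+\tfrac12\|T_2-T_3\|_A$, $w_A(T_2)+\tfrac12\|T_2-T_3\|_A$, $w_A(T_3)+\tfrac12\|T_2+T_3\|_A$ and $w_A(T_2)+\tfrac12\|T_2+T_3\|_A$; since each bounds the left-hand side, so does their minimum, which is precisely $\min\{w_A(T_2),w_A(T_3)\}+\min\{\tfrac12\|T_2+T_3\|_A,\tfrac12\|T_2-T_3\|_A\}$.

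The routine checks are that every summand lies in $\mathcal{L}_{\mathbb{A}^{1/2}}(\mathcal{H}\oplus\mathcal{H})$ so that the quoted identities apply, which follows because $T_2,T_3\in\mathcal{L}_A(\mathcal{H})\subseteq\mathcal{L}_{A^{1/2}}(\mathcal{H})$ together with Lemma~\ref{lm5}, and that $\mathcal{L}_A(\mathcal{H})$ is closed under the sums $T_2\pm T_3$. The only genuine subtlety I anticipate is the sign bookkeeping in the two twisted symmetric blocks: one must correctly combine the swap invariance (ii) and the phase invariance (iii) of Lemma~\ref{lem0001} to see that $\begin{bmatrix} 0 & -T_3 \\ T_3 & 0 \end{bmatrix}$ and $\begin{bmatrix} 0 & T_3 \\ T_3 & 0 \end{bmatrix}$ share the same $\mathbb{A}$-numerical radius; everything else is a direct application of the lemmas collected in the preliminaries.
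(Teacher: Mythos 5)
Your proof is correct, but it takes a genuinely different route from the paper. The paper conjugates the off-diagonal matrix by the $\mathbb{A}$-unitary $U=\tfrac{1}{\sqrt{2}}\begin{bmatrix} I & -I\\ I & I\end{bmatrix}$ (verifying $\mathbb{A}$-unitarity via a Moore--Penrose computation), invokes weak $\mathbb{A}$-unitary invariance \eqref{weak} and the block-adjoint formula of Lemma~\ref{lemma1}(ii) to replace the original matrix by $\tfrac12\begin{bmatrix} T_2+T_3 & T_2-T_3\\ -(T_2-T_3) & -(T_2+T_3)\end{bmatrix}$, and only then splits this into the nilpotent block $\begin{bmatrix} X & X\\ -X & -X\end{bmatrix}$ of \eqref{Aeq102} plus an antisymmetric off-diagonal block; the remaining three bounds are then obtained by the substitutions $T_3\mapsto -T_3$ and $T_2\leftrightarrow T_3$, exactly as you do. You instead bypass the unitary conjugation entirely: your four direct decompositions into a symmetric (or sign-twisted) off-diagonal part, evaluated by Lemma~\ref{lem0001}(ii)--(iv), plus a strictly triangular nilpotent part, evaluated by \eqref{Aeq101} together with Lemma~\ref{lemma1}(i), produce the same four upper bounds $w_A(T_i)+\tfrac12\|T_2\pm T_3\|_A$, and since all four combinations occur, their minimum is indeed the sum of the two minima. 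Your argument is shorter and needs less machinery (no $\mathbb{A}$-unitarity check, no weak invariance, no $\#_{\mathbb{A}}$ bookkeeping); what the paper's heavier setup buys is reuse, since the same conjugation identity \eqref{A1} is the engine of the lower bounds in Theorem~\ref{Athm101}, which your decompositions do not directly yield.
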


\begin{proof}
		Let $U=\frac{1}{\sqrt{2}}\begin{bmatrix}
	I & -I\\
	I & I
	\end{bmatrix}$.
	To show that $U$ is $\mathbb{A}$-unitary, we need to prove that $\|x\|_\mathbb{A}=\|Ux\|_\mathbb{A}=\|U^{\#_\mathbb{A}}x\|_\mathbb{A}.$ So,
	\begin{align*}
	U^{\#_\mathbb{A}}&=  \mathbb{A}^\dagger U^{*} \mathbb{A}\\
	&=\frac{1}{\sqrt{2}}\begin{bmatrix}
	A^{\dagger} & O\\
	O & A^{\dagger}
	\end{bmatrix}\begin{bmatrix}
	I & I\\
	-I & I
	\end{bmatrix}\begin{bmatrix}
	A & O\\
	O & A
	\end{bmatrix}\\
	&=\frac{1}{\sqrt{2}}\begin{bmatrix}
	A^{\dagger}A & A^{\dagger}A\\
	-A^{\dagger}A & A^{\dagger}A
	\end{bmatrix}\\
	&=\frac{1}{\sqrt{2}}\begin{bmatrix}
	P_{\overline{\mathcal{R}(A)}} & P_{\overline{\mathcal{R}(A)}}\\
	-P_{\overline{\mathcal{R}(A)}} & P_{\overline{\mathcal{R}(A)}}
	\end{bmatrix} ~~~\because ~~ N(A)^{\perp}=\overline{\mathcal{R}(A^*)} ~~\&~~ \mathcal{R}(A^*)=\mathcal{R}(A).
	\end{align*}
	This in turn implies
	$UU^{\#_\mathbb{A}}=\begin{bmatrix}
	P_{\overline{\mathcal{R}(A)}} & O\\
	O & P_{\overline{\mathcal{R}(A)}}
	\end{bmatrix}= U^{\#_\mathbb{A}}U$.
	Now, for $x=(x_1,x_2)\in \mathcal{H}\bigoplus \mathcal{H}$, we have
	
	\begin{align*}
	\|Ux\|_\mathbb{A}^2=\langle Ux, Ux\rangle_\mathbb{A}=\langle U^{\#_\mathbb{A}}Ux,x\rangle_\mathbb{A} &=\left\langle \begin{bmatrix}
	P_{\overline{\mathcal{R}(A)}} & O\\
	O & P_{\overline{\mathcal{R}(A)}}
	\end{bmatrix}\begin{bmatrix}
	x_1\\
	x_2
	\end{bmatrix},\begin{bmatrix}
	x_1\\
	x_2
	\end{bmatrix}\right\rangle_\mathbb{A}\\
	&=\left\langle\begin{bmatrix}
	AP_{\overline{\mathcal{R}(A)}} & O\\
	O & AP_{\overline{\mathcal{R}(A)}}
	\end{bmatrix}\begin{bmatrix}
	x_1\\
	x_2
	\end{bmatrix},\begin{bmatrix}
	x_1\\
	x_2
	\end{bmatrix}\right\rangle\\
	& = \left\langle\begin{bmatrix}
	AA^\dagger A & O\\
	O & AA^\dagger A
	\end{bmatrix}\begin{bmatrix}
	x_1\\
	x_2
	\end{bmatrix},\begin{bmatrix}
	x_1\\
	x_2
	\end{bmatrix}\right\rangle\\
	&=\left\langle\begin{bmatrix}
	A & O\\
	O & A
	\end{bmatrix}\begin{bmatrix}
	x_1\\
	x_2
	\end{bmatrix},\begin{bmatrix}
	x_1\\
	x_2
	\end{bmatrix}\right\rangle\\
	&=\|x\|_{\mathbb{A}}^2.
	\end{align*}
	So, $\|Ux\|_\mathbb{A}=\|x\|_\mathbb{A}.$ Similarly, it can be proved that $\|U^{\#_\mathbb{A}}x\|_\mathbb{A}=\|x\|_\mathbb{A}.$ Thus, $U$ is an $\mathbb{A}$-unitary operator.  
	
	 Using the identity $w_{\mathbb{A}}(T)=w_{\mathbb{A}}(U^{\#_{\mathbb{A}}}TU)$, we have
	\begin{align}
	w_{\mathbb{A}}\left(\begin{bmatrix}
	0 & T_2\\
	T_3 & 0
	\end{bmatrix}\right)=w_{\mathbb{A}}\left(\begin{bmatrix}
	0&T_2\\T_3&0
	\end{bmatrix}^{\#_{\mathbb{A}}}\right)=& w_{\mathbb{A}}\left(U^{\#_{\mathbb{A}}}\begin{bmatrix}
	0 & T_2\\
	T_3 & 0
	\end{bmatrix}^{\#_{\mathbb{A}}}U\right)\nonumber\\
	&=\frac{1}{2}w_{\mathbb{A}}\left(\begin{bmatrix}
	I&-I\\I&I
	\end{bmatrix}^{\#_{\mathbb{A}}}\begin{bmatrix}
	0&T_3^{\#_A}\\T_2^{\#_A}&0
	\end{bmatrix}\begin{bmatrix}
	I&-I\\I&I
	\end{bmatrix}\right)\nonumber\\
	&=\frac{1}{2}w_{\mathbb{A}}\left(\begin{bmatrix}
	P_{\overline{\mathcal{R}(A)}}&P_{\overline{\mathcal{R}(A)}}\\-P_{\overline{\mathcal{R}(A)}}&P_{\overline{\mathcal{R}(A)}}
	\end{bmatrix}\begin{bmatrix}
	0&T_3^{\#_A}\\T_2^{\#_A}&0
	\end{bmatrix}\begin{bmatrix}
	I&-I\\I&I
	\end{bmatrix}\right) \nonumber\\
	&=\frac{1}{2}w_{\mathbb{A}}\left(\begin{bmatrix}
	P_{\overline{\mathcal{R}(A)}}&P_{\overline{\mathcal{R}(A)}}\\-P_{\overline{\mathcal{R}(A)}}&P_{\overline{\mathcal{R}(A)}}
	\end{bmatrix}\begin{bmatrix}
	T_3^{\#_A}&T_3^{\#_A}\\T_2^{\#_A}&-T_2^{\#_A}
	\end{bmatrix}\right)\nonumber\\
	&=\frac{1}{2}w_{\mathbb{A}}\left(\begin{bmatrix}
	T_3^{\#_A}+T_2^{\#_A}&T_3^{\#_A}-T_2^{\#_A}\\-T_3^{\#_A}+T_2^{\#_A}&-T_3^{\#_A}-T_2^{\#_A}
	\end{bmatrix}\right)\nonumber\\
		&=\frac{1}{2}w_{\mathbb{A}}\left(\begin{bmatrix}
	T_2+T_3 & T_2-T_3\\
	-(T_2-T_3) & -(T_2+T_3)
	\end{bmatrix}^{\#_{\mathbb{A}}}\right)\nonumber\\
		&=\frac{1}{2}w_{\mathbb{A}}\left(\begin{bmatrix}
	T_2+T_3 & T_2-T_3\\
	-(T_2-T_3) & -(T_2+T_3)
	\end{bmatrix}\right)~~ (as~ w_A(T)=w_A(T^{\#_{\mathbb{A}}}))\nonumber\\
	&=\frac{1}{2}w_{\mathbb{A}}\left(\begin{bmatrix}
	T_2+T_3 & T_2+T_3\\
	-(T_2+T_3) & -(T_2+T_3)
	\end{bmatrix}+\begin{bmatrix}
	0 & -2T_3\\
	2T_3 & 0
	\end{bmatrix}\right)\nonumber\\
	&\leq \frac{1}{2}\left\{w_{\mathbb{A}}\left(\begin{bmatrix}
T_2+T_3 & T_2+T_3\\
-(T_2+T_3) & -(T_2+T_3)
	\end{bmatrix}\right)+w_{\mathbb{A}}\left(\begin{bmatrix}
	0 & -2T_3\\
2T_3 & 0
	\end{bmatrix}\right)\right\}\nonumber
	\end{align}
	Now, using  identity \eqref{Aeq102} and Lemma \ref{lem0001}, we have
	\begin{align}\label{Aeq103}
	w_{\mathbb{A}}\left(\begin{bmatrix}
0 & T_2\\
T_3 & 0
\end{bmatrix}\right)\leq \frac{\|T_2+T_3\|_A}{2}+w_A(T_3).
	\end{align}
	Replacing $T_3$ by $-T_3$ in the inequality \eqref{Aeq103} and using Lemma \ref{lem0001}, we get 
	\begin{align}\label{Aeq104}
		w_{\mathbb{A}}\left(\begin{bmatrix}
	0 & T_2\\
	T_3 & 0
	\end{bmatrix}\right)\leq\frac{\|T_2-T_3\|_A}{2}+w_A(T_3).
	\end{align}
	From the inequalities \eqref{Aeq103} and \eqref{Aeq104}, we have 
		\begin{align}\label{Aeq105}
	w_{\mathbb{A}}\left(\begin{bmatrix}
	0 & T_2\\
	T_3 & 0
	\end{bmatrix}\right)\leq  w_A(T_3)+\min\left\{\frac{\|T_2+T_3\|_A}{2}, \frac{\|T_2-T_3\|_A}{2}\right\}.
	\end{align}
	Again, in the inequality \eqref{Aeq105}, interchanging $T_2$ and $T_3$ and using Lemma \ref{lem0001}(ii), we get
		\begin{align}\label{Aeq106}
	w_{\mathbb{A}}\left(\begin{bmatrix}
	0 & T_2\\
	T_3 & 0
	\end{bmatrix}\right)\leq  w_A(T_2)+\min\left\{\frac{\|T_2+T_3\|_A}{2}, \frac{\|T_2-T_3\|_A}{2}\right\}.
	\end{align}
	From the inequalities \eqref{Aeq105} and \eqref{Aeq106}, we get
	\begin{align*}
	w_{\mathbb{A}}\left(\begin{bmatrix}
	0 & T_2\\
	T_3 & 0
	\end{bmatrix}\right)\leq \min\left\{w_A(T_2), w_A(T_3)\right\}+\min\left\{\frac{\|T_2+T_3\|_A}{2}, \frac{\|T_2-T_3\|_A}{2}\right\}.
	\end{align*}
	This completes the proof.
\end{proof}

\begin{theorem}\label{Athm101}
	Let	$T_2,T_3\in {\mathcal{L}}_A(\mathcal{H})$.  Then
	\begin{align*}
	w_{\mathbb{A}}\left(\begin{bmatrix}
	0 & T_2\\
	T_3 & 0
	\end{bmatrix}\right)\geq \max\left\{w_A(T_2), w_A(T_3)\right\}-\min\left\{\frac{\|T_2+T_3\|_A}{2}, \frac{\|T_2-T_3\|_A}{2}\right\}.
	\end{align*}
	and 
	 \begin{align*}
	w_{\mathbb{A}}\left(\begin{bmatrix}
0 & T_2\\
T_3 & 0
\end{bmatrix}\right)\geq	\max\left\{\frac{\| T_2+T_3\|_A}{2}, \frac{\| T_2-T_3\|_A}{2}\right\}-\min\{w_A(T_2), w_A(T_3)\}.
	\end{align*}
\end{theorem}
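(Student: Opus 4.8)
The plan is to recycle the $\mathbb{A}$-unitary reduction already performed in the proof of Theorem \ref{Athm100}, but to replace the triangle inequality used there by its reverse. Write $S=\begin{bmatrix} 0 & T_2 \\ T_3 & 0 \end{bmatrix}$. The proof of Theorem \ref{Athm100} establishes the identity
\[
2\,w_{\mathbb{A}}(S)=w_{\mathbb{A}}\!\left(\begin{bmatrix} T_2+T_3 & T_2-T_3 \\ -(T_2-T_3) & -(T_2+T_3) \end{bmatrix}\right),
\]
together with the splitting of the right-hand matrix as $M_+ + N$, where $M_+=\begin{bmatrix} T_2+T_3 & T_2+T_3 \\ -(T_2+T_3) & -(T_2+T_3) \end{bmatrix}$ and $N=\begin{bmatrix} 0 & -2T_3 \\ 2T_3 & 0 \end{bmatrix}$. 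By \eqref{Aeq102} one has $w_{\mathbb{A}}(M_+)=\|T_2+T_3\|_A$, and by Lemma \ref{lem0001}(ii)--(iv) one has $w_{\mathbb{A}}(N)=2\,w_A(T_3)$.

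The first step is to apply the reverse triangle inequality for the seminorm $w_{\mathbb{A}}$ to $M_+ + N$ in both directions. Since $w_{\mathbb{A}}(M_+ + N)\geq \lvert w_{\mathbb{A}}(M_+)-w_{\mathbb{A}}(N)\rvert$, the displayed identity yields the two base estimates
\[
w_{\mathbb{A}}(S)\geq w_A(T_3)-\frac{\|T_2+T_3\|_A}{2}\quad\text{and}\quad w_{\mathbb{A}}(S)\geq \frac{\|T_2+T_3\|_A}{2}-w_A(T_3).
\]
The second step is to run the two symmetry reductions already used in Theorem \ref{Athm100}. Replacing $T_3$ by $-T_3$ leaves $w_{\mathbb{A}}(S)$ and $w_A(T_3)$ unchanged (Lemma \ref{lem0001}(iii)) while turning $\|T_2+T_3\|_A$ into $\|T_2-T_3\|_A$, producing the analogous pair with $\|T_2-T_3\|_A$; interchanging $T_2$ and $T_3$ preserves $w_{\mathbb{A}}(S)$ (Lemma \ref{lem0001}(ii)) and both norm terms, producing the analogous pairs with $w_A(T_2)$ in place of $w_A(T_3)$.

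The final step is purely combinatorial. From the four estimates of the form $w_{\mathbb{A}}(S)\geq w_A(T_j)-\tfrac12\|T_2\pm T_3\|_A$, retaining the sharpest one gives $w_{\mathbb{A}}(S)\geq \max\{w_A(T_2),w_A(T_3)\}-\min\{\tfrac12\|T_2+T_3\|_A,\tfrac12\|T_2-T_3\|_A\}$, which is the first assertion; from the four estimates of the form $w_{\mathbb{A}}(S)\geq \tfrac12\|T_2\pm T_3\|_A-w_A(T_j)$, retaining the sharpest one gives $w_{\mathbb{A}}(S)\geq \max\{\tfrac12\|T_2+T_3\|_A,\tfrac12\|T_2-T_3\|_A\}-\min\{w_A(T_2),w_A(T_3)\}$, which is the second assertion. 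I expect no serious obstacle: the two assertions are nothing but the two directions of the reverse triangle inequality applied to the decomposition already underlying Theorem \ref{Athm100}. The only point requiring care is the bookkeeping of $\min$ versus $\max$ when merging the four estimates—one must subtract the smaller norm term in the first case and the smaller $w_A$ term in the second—so that the sharpest combination is kept in each case.
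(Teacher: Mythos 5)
Your proposal is correct and is essentially the paper's own argument: the paper derives the same base estimates \eqref{Aeq108} and \eqref{Aeq112} from the identity \eqref{A1} (the $\#_{\mathbb{A}}$-adjoint form of your decomposition $M_++N$) by rearranging and applying the triangle inequality for $w_{\mathbb{A}}$, which is exactly your reverse triangle inequality, and then runs the identical symmetry reductions ($T_3\mapsto -T_3$ via Lemma \ref{lem0001}(iii), swapping $T_2$ and $T_3$ via Lemma \ref{lem0001}(ii)) and the same merging of the four estimates. The only cosmetic difference is that you phrase the key step abstractly at the level of the seminorm $w_{\mathbb{A}}$, while the paper carries the operator identity with adjoints explicitly.
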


\begin{proof}
	Let $U=\frac{1}{\sqrt{2}}\begin{bmatrix}
I & -I\\
I & I
\end{bmatrix}$. It can be shown that $U$ is ${\mathbb{A}}$-unitary. Then 	
	
	\begin{align}\label{A1}
	\frac{1}{2}\begin{bmatrix}
	T_2+T_3 & T_2+T_3\\
	-(T_2+T_3) & -(T_2+T_3)
	\end{bmatrix}^{\#_{\mathbb{A}}}= U^{\#_{\mathbb{A}}}\begin{bmatrix}
	0 & T_2\\
	T_3 & 0
	\end{bmatrix}^{\#_{\mathbb{A}}}U-	\begin{bmatrix}
	0 & -T_3\\
	T_3 & 0
	\end{bmatrix}^{\#_{\mathbb{A}}}
	.
	\end{align} 
	So, 
	\begin{align}\label{Aeq107}
	\begin{bmatrix}
0 & -T_3\\
T_3 & 0
\end{bmatrix}^{\#_{\mathbb{A}}}=	 U^{\#_{\mathbb{A}}}\begin{bmatrix}
	0 & T_2\\
	T_3 & 0
	\end{bmatrix}^{\#_{\mathbb{A}}}U-\frac{1}{2}\begin{bmatrix}
	T_2+T_3 & T_2+T_3\\
	-(T_2+T_3) & -(T_2+T_3)
	\end{bmatrix}^{\#_{\mathbb{A}}}.
	\end{align}
	This implies 
	\begin{align*}
		w_{\mathbb{A}}\left(\begin{bmatrix}
	0 & -T_3\\
	T_3 & 0
	\end{bmatrix}^{\#_{\mathbb{A}}}\right)\leq	 	w_{\mathbb{A}}\left(U^{\#_{\mathbb{A}}}\begin{bmatrix}
	0 & T_2\\
	T_3 & 0
	\end{bmatrix}^{\#_{\mathbb{A}}}U\right)+\frac{1}{2}w_{\mathbb{A}}\left(\begin{bmatrix}
	T_2+T_3 & T_2+T_3\\
	-(T_2+T_3) & -(T_2+T_3)
	\end{bmatrix}^{\#_{\mathbb{A}}}\right).
	\end{align*}
	Which in turn implies that 
		\begin{align*}
	w_{\mathbb{A}}\left(\begin{bmatrix}
	0 & -T_3\\
	T_3 & 0
	\end{bmatrix}\right)&\leq	 	w_{\mathbb{A}}\left(\begin{bmatrix}
	0 & T_2\\
	T_3 & 0
	\end{bmatrix}^{\#_{\mathbb{A}}}\right)+\frac{1}{2}w_{\mathbb{A}}\left(\begin{bmatrix}
	T_2+T_3 & T_2+T_3\\
	-(T_2+T_3) & -(T_2+T_3)
	\end{bmatrix}\right)\\
	& =	w_{\mathbb{A}}\left(\begin{bmatrix}
	0 & T_2\\
	T_3 & 0
	\end{bmatrix}\right)+\frac{1}{2}w_{\mathbb{A}}\left(\begin{bmatrix}
	T_2+T_3 & T_2+T_3\\
	-(T_2+T_3) & -(T_2+T_3)
	\end{bmatrix}\right).
	\end{align*}
	Thus, using inequality \eqref{Aeq102} and Lemma \ref{lem0001}
		\begin{align}\label{Aeq108}
	w_A(T_3)\leq	 	w_{\mathbb{A}}\left(\begin{bmatrix}
	0 & T_2\\
	T_3 & 0
	\end{bmatrix}\right)+\frac{\|T_2+T_3\|_A}{2} .
	\end{align}
	Replacing $T_3$ by $-T_3$ in the inequality \eqref{Aeq108} we have 
		\begin{align}\label{Aeq109}
	w_A(T_3)\leq	 	w_{\mathbb{A}}\left(\begin{bmatrix}
	0 & T_2\\
	T_3 & 0
	\end{bmatrix}\right)+\frac{\|T_2-T_3\|_A}{2} .
	\end{align}
	Now from inequality \eqref{Aeq108} and \eqref{Aeq109} that 
		\begin{align}\label{Aeq110}
	w_A(T_3)\leq	 	w_{\mathbb{A}}\left(\begin{bmatrix}
	0 & T_2\\
	T_3 & 0
	\end{bmatrix}\right)+\min\left\{\frac{\|T_2+T_3\|_A}{2}, \frac{\|T_2-T_3\|_A}{2}\right\} .
	\end{align}
	Interchanging $T_2$ and $T_3$ in the ininequality \eqref{Aeq110}, we get
		\begin{align}\label{Aeq111}
	w_A(T_2)   \leq	 w_{\mathbb{A}}\left(\begin{bmatrix}
         	0 & T_2\\
	       T_3 & 0
	\end{bmatrix}\right)+\min\left\{\frac{\|T_2+T_3\|_A}{2}, \frac{\|T_2-T_3\|_A}{2}\right\} .
	\end{align}
	From inequalities \eqref{Aeq110} and \eqref{Aeq111}, we have 
		\begin{align}
	\max\{w_A(T_2), w_A(T_3)\}   \leq	 w_{\mathbb{A}}\left(\begin{bmatrix}
	0 & T_2\\
	T_3 & 0
	\end{bmatrix}\right)+\min\left\{\frac{\|T_2+T_3\|_A}{2}, \frac{\|T_2-T_3\|_A}{2}\right\} .
	\end{align}
	Which proves the first inequality.\\
	Again, by identity \eqref{A1} and inequality \eqref{Aeq102} that 
		\begin{align*}
\frac{1}{2}\| T_2+T_3\|_A=	& \frac{1}{2}w_{\mathbb{A}}\left(\begin{bmatrix}
	 T_2+T_3 & T_2+T_3\\
	 -(T_2+T_3) & -(T_2+T_3)
	 \end{bmatrix}\right)\\
	 &= \frac{1}{2}w_{\mathbb{A}}\left(\begin{bmatrix}
	 T_2+T_3 & T_2+T_3\\
	 -(T_2+T_3) & -(T_2+T_3)
	 \end{bmatrix}^{\#_{\mathbb{A}}}\right)\\
	 &\leq	w_{\mathbb{A}}\left(U^{\#_{\mathbb{A}}}\begin{bmatrix}
	0 & T_2\\
	T_3 & 0
	\end{bmatrix}^{\#_{\mathbb{A}}}U\right)+	w_{\mathbb{A}}\left(\begin{bmatrix}
	0 & -T_3\\
	T_3 & 0
	\end{bmatrix}^{\#_{\mathbb{A}}}\right)\\
	&=	w_{\mathbb{A}}\left(\begin{bmatrix}
	0 & T_2\\
	T_3 & 0
	\end{bmatrix}^{\#_{\mathbb{A}}}\right)+	w_{\mathbb{A}}\left(\begin{bmatrix}
	0 & -T_3\\
	T_3 & 0
	\end{bmatrix}\right)\\
	&=w_{\mathbb{A}}\left(\begin{bmatrix}
	0 & T_2\\
	T_3 & 0
	\end{bmatrix}\right)+w_A(T_3) ~~\mbox{by Lemma \ref{lem0001}} .
	\end{align*}
	Thus, 
		\begin{align}\label{Aeq112}
	\frac{1}{2}\| T_2+T_3\|_A\leq	w_{\mathbb{A}}\left(\begin{bmatrix}
	0 & T_2\\
	T_3 & 0
	\end{bmatrix}\right)+w_A(T_3).
	\end{align}
	Replacing $T_3$ by $-T_3$ in the inequality \eqref{Aeq112} and using Lemma \ref{lem0001}, we get
		\begin{align}\label{Aeq113}
	\frac{1}{2}\| T_2-T_3\|_A\leq	w_{\mathbb{A}}\left(\begin{bmatrix}
	0 & T_2\\
	T_3 & 0
	\end{bmatrix}\right)+w_A(T_3).
	\end{align}
	It follows from inequalities \eqref{Aeq112} and \eqref{Aeq113} that 
	\begin{align}\label{Aeq114}
	\max\left\{\frac{\| T_2+T_3\|_A}{2}, \frac{\| T_2-T_3\|_A}{2}\right\}\leq	w_{\mathbb{A}}\left(\begin{bmatrix}
	0 & T_2\\
	T_3 & 0
	\end{bmatrix}\right)+w_A(T_3).
	\end{align}
	Interchanging $T_2$ and $T_3$ in the inequality \eqref{Aeq114} and using Lemma \ref{lem0001}, we get
		\begin{align}\label{Aeq115}
	\max\left\{\frac{\| T_2+T_3\|_A}{2}, \frac{\| T_2-T_3\|_A}{2}\right\}\leq	w_{\mathbb{A}}\left(\begin{bmatrix}
	0 & T_2\\
	T_3 & 0
	\end{bmatrix}\right)+w_A(T_2).
	\end{align}
	 Now combining \eqref{Aeq114} and \eqref{Aeq115}, we have
	 \begin{align}\label{Aeq116}
	 \max\left\{\frac{\| T_2+T_3\|_A}{2}, \frac{\| T_2-T_3\|_A}{2}\right\}-\min\{w_A(T_2), w_A(T_3)\}\leq	w_{\mathbb{A}}\left(\begin{bmatrix}
	 0 & T_2\\
	 T_3 & 0
	 \end{bmatrix}\right).
	 \end{align}
	 This completes the proof.
\end{proof}

\begin{theorem}\label{Athm102}
	Let	$T_2,T_3\in {\mathcal{L}}_A(\mathcal{H})$.  Then
	\begin{align*}
	w_{\mathbb{A}}^2\left(\begin{bmatrix}
	0 & T_2\\
	T_3 & 0
	\end{bmatrix}\right)\geq \frac{1}{2}\bigg\{w_A(T_2 T_3+T_3 T_2), w_A(T_2 T_3-T_3 T_2)\bigg\}.
	\end{align*}
\end{theorem}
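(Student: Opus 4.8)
The plan is to reduce everything to the square of the block matrix and then lean on the power inequality together with the diagonal-block evaluation from Lemma \ref{lem0001}(i). Writing $T=\begin{bmatrix} 0 & T_2\\ T_3 & 0\end{bmatrix}$, a one-line block multiplication gives $T^2=\begin{bmatrix} T_2T_3 & O\\ O & T_3T_2\end{bmatrix}$. Since ${\mathcal{L}}_A(\mathcal{H})$ is an algebra, both $T_2T_3$ and $T_3T_2$ lie in ${\mathcal{L}}_A(\mathcal{H})$, so Lemma \ref{lem0001}(i) applies to $T^2$ and yields
\[
w_{\mathbb{A}}(T^2)=\max\{w_A(T_2T_3),\,w_A(T_3T_2)\}.
\]

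Next I would invoke the power inequality \eqref{power} with $n=2$ for the operator $T$ on $\mathcal{H}\oplus\mathcal{H}$ (this $T$ admits an $\mathbb{A}$-adjoint by Lemma \ref{lemma1}(ii)), giving $w_{\mathbb{A}}^2(T)\geq w_{\mathbb{A}}(T^2)=\max\{w_A(T_2T_3),\,w_A(T_3T_2)\}$. This is the structural heart of the argument; everything afterward is an elementary comparison of seminorms.

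Finally I would pass from the two separate quantities $w_A(T_2T_3)$ and $w_A(T_3T_2)$ to their sum and difference. Because $w_A(\cdot)$ is subadditive and absolutely homogeneous (immediate from \eqref{eqn1a}, in particular $w_A(-S)=w_A(S)$), one has for either sign
\[
w_A(T_2T_3\pm T_3T_2)\leq w_A(T_2T_3)+w_A(T_3T_2)\leq 2\max\{w_A(T_2T_3),\,w_A(T_3T_2)\}.
\]
Dividing by $2$ and combining with the previous step gives $\tfrac12\,w_A(T_2T_3\pm T_3T_2)\leq w_{\mathbb{A}}^2(T)$ for both signs, which is exactly the asserted bound read with a maximum over the two expressions inside the braces.

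I do not expect a serious obstacle here. The only points demanding care are the verification of $T^2$ and, more importantly, the \emph{direction} of the estimate: the inequality is driven by bounding the anticommutator and commutator \emph{above} by the larger of the two individual radii, so subadditivity is being used in the favorable direction rather than against us. I would also flag that the brace in the statement should be interpreted as a $\max$, since the proof delivers the lower bound separately for each of $w_A(T_2T_3+T_3T_2)$ and $w_A(T_2T_3-T_3T_2)$.
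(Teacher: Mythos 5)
Your proof is correct, but it follows a genuinely different route from the paper's. The paper's argument is the semi-Hilbertian analogue of the Hirzallah--Kittaneh--Shebrawi conjugation trick: it introduces the $\mathbb{A}$-unitary flip $U=\begin{bmatrix} 0 & I\\ I & 0\end{bmatrix}$, identifies $(T^{\#_{\mathbb{A}}})^2\pm(U^{\#_{\mathbb{A}}}T^{\#_{\mathbb{A}}}U)^2$ with the $\mathbb{A}$-adjoint of the diagonal operator matrix with entries $T_2T_3\pm T_3T_2$ and $T_3T_2\pm T_2T_3$, and then combines subadditivity of $w_{\mathbb{A}}$, the power inequality, weak $\mathbb{A}$-unitary invariance \eqref{weak}, and $w_{\mathbb{A}}(T)=w_{\mathbb{A}}(T^{\#_{\mathbb{A}}})$ to conclude $w_A(T_2T_3\pm T_3T_2)\le 2\,w_{\mathbb{A}}^2(T)$. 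You instead square $T$ directly, apply the power inequality \eqref{power} (for $\mathbb{A}$ acting on $\mathcal{H}\oplus\mathcal{H}$) together with Lemma \ref{lem0001}(i), and finish with the triangle inequality for $w_A$; all of these steps are sound, since ${\mathcal{L}}_A(\mathcal{H})$ is a subalgebra (so $T_2T_3,T_3T_2\in{\mathcal{L}}_A(\mathcal{H})$ and the lemma applies to $T^2$), and $w_A$ is subadditive with $w_A(-S)=w_A(S)$ directly from \eqref{eqn1a}. Notably, your intermediate estimate $w_{\mathbb{A}}^2(T)\ge\max\{w_A(T_2T_3),w_A(T_3T_2)\}$ is exactly the $n=1$ case of the paper's Theorem \ref{Athm104}, whose proof in the paper coincides with your first two steps; so you have in effect shown that Theorem \ref{Athm102} is a corollary of Theorem \ref{Athm104}, and indeed that its bound is the weaker one, because
\begin{equation*}
\tfrac{1}{2}\,w_A(T_2T_3\pm T_3T_2)\le\tfrac{1}{2}\bigl(w_A(T_2T_3)+w_A(T_3T_2)\bigr)\le\max\{w_A(T_2T_3),w_A(T_3T_2)\}.
\end{equation*}
What the paper's longer argument buys is mainly the method itself (the unitary-conjugation decomposition of commutator and anticommutator), not a sharper estimate: as a lower bound for $w_{\mathbb{A}}$ of the off-diagonal matrix it is dominated by yours. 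Your reading of the braces as a maximum also matches the paper's intent, since its proof establishes the two inequalities separately and then combines them.
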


\begin{proof}
	Let us consider $A$-unitary operator $U=\begin{bmatrix}
	0 & I\\
	I & 0
	\end{bmatrix}$; $U^{\#_{\mathbb{A}}}=\begin{bmatrix}
	0&P_{\overline{\mathcal{R}(A)}}\\P_{\overline{\mathcal{R}(A)}}&0
	\end{bmatrix}; T=\begin{bmatrix}
	0 & T_2\\
	T_3 & 0
	\end{bmatrix}.$
	Now,
	\begin{align*}
	(T^{\#_{\mathbb{A}}})^2+(U^{\#_{\mathbb{A}}}T^{\#_{\mathbb{A}}}U)^2&=\begin{bmatrix}
	0 & T_3^{\#_A}\\
	T_2^{\#_A} & 0
	\end{bmatrix}^2+\left(\begin{bmatrix}
	0&P_{\overline{\mathcal{R}(A)}}\\P_{\overline{\mathcal{R}(A)}}&0
	\end{bmatrix}\begin{bmatrix}
	0 & T_3^{\#_A}\\
	T_2^{\#_A} & 0
	\end{bmatrix}\begin{bmatrix}
	0 & I\\
	I & 0
	\end{bmatrix}\right)^2\\
		&=\begin{bmatrix}
		 T_3^{\#_A} T_2^{\#_A} &0\\
		0 &  T_2^{\#_A} T_3^{\#_A}
		\end{bmatrix}+\left(\begin{bmatrix}
	0 & T_2^{\#_A}\\
	T_3^{\#_A} & 0
	\end{bmatrix}\right)^2\\
	&=\begin{bmatrix}
	T_3^{\#_A} T_2^{\#_A} &0\\
	0 &  T_2^{\#_A} T_3^{\#_A}
	\end{bmatrix}+\begin{bmatrix}
	T_2^{\#_A} T_3^{\#_A} &0\\
	0 &  T_3^{\#_A} T_2^{\#_A}
	\end{bmatrix}\\
	&=\begin{bmatrix}
	T_3^{\#_A} T_2^{\#_A}+T_2^{\#_A} T_3^{\#_A} &0\\
	0 &  T_2^{\#_A} T_3^{\#_A}+T_3^{\#_A} T_2^{\#_A}
	\end{bmatrix}\\
	&=\begin{bmatrix}
	T_2 T_3+T_3 T_2 &0\\
	0 &  T_3 T_2+T_2 T_3
	\end{bmatrix}^{\#_{\mathbb{A}}}.
	\end{align*}
So,
\begin{align*}
w_{\mathbb{A}}\left(\begin{bmatrix}
T_2 T_3+T_3 T_2 &0\\
0 &  T_3 T_2+T_2 T_3
\end{bmatrix}\right)&=w_{\mathbb{A}}\left(\begin{bmatrix}
T_2 T_3+T_3 T_2 &0\\
0 &  T_3 T_2+T_2 T_3
\end{bmatrix}^{\#_{\mathbb{A}}}\right)\\
&=w_{\mathbb{A}}\left(	(T^{\#_{\mathbb{A}}})^2+(U^{\#_{\mathbb{A}}}T^{\#_{\mathbb{A}}}U)^2\right)\\
&\leq w_{\mathbb{A}}\left(	(T^{\#_{\mathbb{A}}})^2\right)+w_{\mathbb{A}}\left((U^{\#_{\mathbb{A}}}T^{\#_{\mathbb{A}}}U)^2\right)\\
&\leq w_{\mathbb{A}}^2\left(	T^{\#_{\mathbb{A}}}\right)+w_{\mathbb{A}}^2\left(U^{\#_{\mathbb{A}}}T^{\#_{\mathbb{A}}}U\right)\\
&= w_{\mathbb{A}}^2\left(	T^{\#_{\mathbb{A}}}\right)+w_{\mathbb{A}}^2\left(T^{\#_{\mathbb{A}}}\right)\\
&= w_{\mathbb{A}}^2\left(	T\right)+w_{\mathbb{A}}^2\left(T\right)\\
&=2 w_{\mathbb{A}}^2\left(	T\right)~~~\left(as ~w_{\mathbb{A}}(	T)=w_{\mathbb{A}}(	T^{\#_{\mathbb{A}}})\right).
\end{align*}
Hence by using Lemma \ref{lem0001} we obtain
\begin{align}\label{Aeq117}
w_A(T_2 T_3+T_3 T_2)\leq 2 w_{\mathbb{A}}^2\left(T\right).
\end{align} 
Using similar argument to $(T^{\#_{\mathbb{A}}})^2-(U^{\#_{\mathbb{A}}}T^{\#_{\mathbb{A}}}U)^2$, we have
\begin{align}\label{Aeq118}
w_A(T_2 T_3-T_3 T_2)\leq 2 w_{\mathbb{A}}^2\left(T\right).
\end{align} 
Combining \eqref{Aeq117} and \eqref{Aeq118} we get
\begin{align*}
w_{\mathbb{A}}^2\left(\begin{bmatrix}
0 & T_2\\
T_3 & 0
\end{bmatrix}\right)\geq \frac{1}{2}\bigg\{w_A(T_2 T_3+T_3 T_2), w_A(T_2 T_3-T_3 T_2)\bigg\}.
\end{align*}
\end{proof}

\begin{cor}\label{Acor100}
	Let	$T_1,T_2, T_3, T_4\in {\mathcal{L}}_A(\mathcal{H})$.  Then
	\begin{align*}
	w_{\mathbb{A}}\left(\begin{bmatrix}
	T_1& T_2\\
T_3 & T_4
	\end{bmatrix}\right)\geq \max\bigg\{w_A(T_1), w_A(T_4), \frac{1}{\sqrt{2}}\left(w_A(T_2T_3+T_3T_2)\right)^\frac{1}{2}, \frac{1}{\sqrt{2}}\left(w_A(T_2T_3-T_3T_2)\right)^\frac{1}{2}\bigg\}.
	\end{align*}
\end{cor}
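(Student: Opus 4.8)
The plan is to bound the $\mathbb{A}$-numerical radius of the full $2\times 2$ block matrix from below by each of the four quantities on the right-hand side separately, and then take the maximum. Every ingredient needed is already available among the preceding results, so the argument amounts to assembling them in the correct order; I do not expect a genuine obstacle, only the bookkeeping of chaining the right monotonicity statements.

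First I would isolate the diagonal part. By Lemma \ref{l001}(i),
$$w_{\mathbb{A}}\left(\begin{bmatrix} T_1 & T_2 \\ T_3 & T_4 \end{bmatrix}\right) \geq w_{\mathbb{A}}\left(\begin{bmatrix} T_1 & O \\ O & T_4 \end{bmatrix}\right),$$
and Lemma \ref{lem0001}(i) evaluates the right-hand side as $\max\{w_A(T_1), w_A(T_4)\}$. This immediately produces the two lower bounds $w_A(T_1)$ and $w_A(T_4)$.

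Next I would pass to the off-diagonal part. By Lemma \ref{l001}(ii),
$$w_{\mathbb{A}}\left(\begin{bmatrix} T_1 & T_2 \\ T_3 & T_4 \end{bmatrix}\right) \geq w_{\mathbb{A}}\left(\begin{bmatrix} O & T_2 \\ T_3 & O \end{bmatrix}\right),$$
and here Theorem \ref{Athm102} supplies the two separate bounds
$$w_{\mathbb{A}}^2\left(\begin{bmatrix} O & T_2 \\ T_3 & O \end{bmatrix}\right) \geq \frac{1}{2}\,w_A(T_2T_3+T_3T_2), \qquad w_{\mathbb{A}}^2\left(\begin{bmatrix} O & T_2 \\ T_3 & O \end{bmatrix}\right) \geq \frac{1}{2}\,w_A(T_2T_3-T_3T_2).$$
Taking square roots and chaining with the previous display yields the remaining two lower bounds $\tfrac{1}{\sqrt{2}}\left(w_A(T_2T_3+T_3T_2)\right)^{1/2}$ and $\tfrac{1}{\sqrt{2}}\left(w_A(T_2T_3-T_3T_2)\right)^{1/2}$.

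Finally, since $w_{\mathbb{A}}$ of the full matrix dominates each of the four expressions individually, it dominates their maximum, which is precisely the claimed inequality. The only mild subtlety worth flagging is that Theorem \ref{Athm102} must be read as delivering each of its two estimates on its own (as its proof via \eqref{Aeq117} and \eqref{Aeq118} makes explicit) before the square roots are extracted; once this is noted, the corollary follows as a direct consequence of the monotonicity Lemma \ref{l001} together with Theorem \ref{Athm102}.
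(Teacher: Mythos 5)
Your proposal is correct and follows essentially the same route as the paper's proof: lower-bounding the full matrix by its diagonal and off-diagonal parts via Lemma \ref{l001}, evaluating the diagonal part with Lemma \ref{lem0001}(i), and applying Theorem \ref{Athm102} to the off-diagonal part before taking the maximum. Your remark that Theorem \ref{Athm102} should be read as giving each of its two estimates separately (via \eqref{Aeq117} and \eqref{Aeq118}) is a fair clarification of the paper's slightly loose statement, but it does not change the argument.
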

\begin{proof}
Based on Lemma \ref{l001}, Lemma \ref{lem0001} and Theorem \ref{Athm102} we have 
		\begin{align*}
	w_{\mathbb{A}}\left(\begin{bmatrix}
	T_1& T_2\\
	T_3 & T_4
	\end{bmatrix}\right)&\geq \max\left\{	w_{\mathbb{A}}\left(\begin{bmatrix}
	T_1 & 0\\
	0 &T_4
	\end{bmatrix}\right), 	w_{\mathbb{A}}\left(\begin{bmatrix}
	0 & T_2\\
	T_3 & 0
	\end{bmatrix}\right)\right\}\\
	&\geq \max\bigg\{w_A(T_1), w_A(T_4), \frac{1}{\sqrt{2}}\left(w_A(T_2T_3+T_3T_2)\right)^\frac{1}{2}, \frac{1}{\sqrt{2}}\left(w_A(T_2T_3-T_3T_2)\right)^\frac{1}{2}\bigg\}.
	\end{align*}
\end{proof}

\begin{theorem}\label{Athm104}
	Let	$T_2,T_3\in {\mathcal{L}}_A(\mathcal{H})$. Then for $n\in \mathbb{N}$
	\begin{align}\label{Aeq119}
	w_{\mathbb{A}}\left(\begin{bmatrix}
	0 &T_2\\
	T_3 & 0
	\end{bmatrix}\right)\geq \left[\max\{w_A((T_2T_3)^n), w_A((T_3T_2)^n)\}\right]^{\frac{1}{2n}}.
	\end{align}
\end{theorem}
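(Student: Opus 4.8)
The plan is to exploit the block structure of $T := \begin{bmatrix} 0 & T_2 \\ T_3 & 0 \end{bmatrix}$ together with the power inequality \eqref{power} and the block-diagonal formula recorded in Lemma \ref{lem0001}(i). First I would square $T$: a direct multiplication gives $T^2 = \begin{bmatrix} T_2T_3 & O \\ O & T_3T_2 \end{bmatrix}$, whence raising to the $n$-th power yields $T^{2n} = \begin{bmatrix} (T_2T_3)^n & O \\ O & (T_3T_2)^n \end{bmatrix}$, since the off-diagonal blocks vanish after squaring and the two diagonal blocks then multiply independently. Because ${\mathcal{L}}_A(\mathcal{H})$ is a subalgebra of $\mathcal{B(H)}$, the products $(T_2T_3)^n$ and $(T_3T_2)^n$ again belong to ${\mathcal{L}}_A(\mathcal{H})$, and consequently the block operator $T^{2n}$ lies in ${\mathcal{L}}_{\mathbb{A}}(\mathcal{H}\oplus\mathcal{H})$, which legitimizes applying the two results invoked below.

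Next I would evaluate the $\mathbb{A}$-numerical radius of this diagonal block by Lemma \ref{lem0001}(i), obtaining $w_{\mathbb{A}}(T^{2n}) = \max\{w_A((T_2T_3)^n),\, w_A((T_3T_2)^n)\}$. On the other hand, the power inequality \eqref{power}, applied to the operator $T$ on $\mathcal{H}\oplus\mathcal{H}$ with exponent $2n$ and with $\mathbb{A}$ playing the role of the positive operator, gives $w_{\mathbb{A}}(T^{2n}) \leq \big[w_{\mathbb{A}}(T)\big]^{2n}$. Chaining these two facts produces $\max\{w_A((T_2T_3)^n),\, w_A((T_3T_2)^n)\} \leq \big[w_{\mathbb{A}}(T)\big]^{2n}$, and extracting the positive $2n$-th root of both sides delivers exactly \eqref{Aeq119}.

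Since the entire argument is a short composition of identities and inequalities already established in the excerpt, I do not anticipate a genuine obstacle; the only point demanding a little care is the bookkeeping that keeps every operator inside the appropriate semi-Hilbertian classes, so that both \eqref{power} and Lemma \ref{lem0001}(i) remain applicable at the exponent $2n$. The essential structural observation driving the whole proof is the clean collapse $T^2 = \mathrm{diag}(T_2T_3,\,T_3T_2)$, which converts a statement about the off-diagonal block matrix into a statement about the diagonal block matrix where the factorization in Lemma \ref{lem0001}(i) can be used verbatim.
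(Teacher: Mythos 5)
Your proposal is correct and follows essentially the same route as the paper: compute $T^{2n}=\mathrm{diag}\left((T_2T_3)^n,(T_3T_2)^n\right)$, evaluate its $\mathbb{A}$-numerical radius via Lemma \ref{lem0001}(i), bound it by $w_{\mathbb{A}}^{2n}(T)$ using the power inequality \eqref{power}, and take the $2n$-th root. Your added remarks on membership in ${\mathcal{L}}_A(\mathcal{H})$ and ${\mathcal{L}}_{\mathbb{A}}(\mathcal{H}\oplus\mathcal{H})$ are just careful bookkeeping that the paper leaves implicit.
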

\begin{proof}
	Let $T=\begin{bmatrix}
	0 &T_2\\
	T_3 & 0
	\end{bmatrix}$. Then for $n\in \mathbb{N}$, $T^{2n}=\begin{bmatrix}
	(T_2T_3)^n &0\\
	 0& (T_3T_2)^n 
	\end{bmatrix}$ and using Lemma \ref{lem0001} we  obtain 
	\begin{align*}
	\max\{w_A((T_2T_3)^n), w_A((T_3T_2)^n)\}&=w_{\mathbb{A}}\left(\begin{bmatrix}
	(T_2T_3)^n &0\\
	0& (T_3T_2)^n 
	\end{bmatrix}\right)\\
	&=w_{\mathbb{A}}(T^{2n})\\
	&\leq w_{\mathbb{A}}^{2n}(T) ~~~\mbox{by inequality \ref{power}}\\
	&=w_{\mathbb{A}}^{2n}\left(\begin{bmatrix}
	0 &T_2\\
	T_3 & 0
	\end{bmatrix}\right).
	\end{align*}
\end{proof}
The following lemma is already proved by Hirzallah et al. \cite{TY} for the case of Hilbert space operators. Using similar techinque we can prove this lemma for the case of semi-Hilbert space. Now we state here the result without proof for our purpose.
\begin{lemma}\label{lem0002}
	Let $T=\begin{bmatrix}
	T_1&T_2\\
	T_2 &T_1
	\end{bmatrix}\in {\mathcal{L}}_A(\mathcal{H}\oplus \mathcal{H})$ and $n\in \mathbb{N}$. Then $T^n=\begin{bmatrix}
	P & Q\\
    Q & P
	\end{bmatrix} $ for some $P, Q\in {\mathcal{L}}_A\mathcal{(H)}$ such that $P+Q=(T_1+T_2)^n$ and $P-Q=(T_1-T_2)^n$.
\end{lemma}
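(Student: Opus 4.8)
The plan is to argue by induction on $n$, using the elementary observation that the symmetric block form $\begin{bmatrix} X & Y \\ Y & X \end{bmatrix}$ is preserved under right-multiplication by $T$, and that the associated pair $(X+Y,\,X-Y)$ behaves multiplicatively. First I would dispose of the base case $n=1$: taking $P=T_1$ and $Q=T_2$ gives $P+Q=(T_1+T_2)^1$ and $P-Q=(T_1-T_2)^1$, while $P,Q\in{\mathcal{L}}_A(\mathcal{H})$ since the entries $T_1,T_2$ belong to ${\mathcal{L}}_A(\mathcal{H})$.

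For the inductive step, suppose $T^n=\begin{bmatrix} P & Q \\ Q & P \end{bmatrix}$ with $P+Q=(T_1+T_2)^n$ and $P-Q=(T_1-T_2)^n$. Then
\begin{align*}
T^{n+1}=T^n T=\begin{bmatrix} P & Q \\ Q & P \end{bmatrix}\begin{bmatrix} T_1 & T_2 \\ T_2 & T_1 \end{bmatrix}=\begin{bmatrix} PT_1+QT_2 & PT_2+QT_1 \\ PT_2+QT_1 & PT_1+QT_2 \end{bmatrix},
\end{align*}
which again has the required symmetric form, now with $P'=PT_1+QT_2$ and $Q'=PT_2+QT_1$. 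I would then compute directly that $P'+Q'=(P+Q)(T_1+T_2)=(T_1+T_2)^{n+1}$ and $P'-Q'=(P-Q)(T_1-T_2)=(T_1-T_2)^{n+1}$, which closes the induction. The crucial algebraic point is that collecting the four entries of the product splits cleanly into the two combinations $T_1+T_2$ and $T_1-T_2$, so the bookkeeping never mixes the two.

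It remains to check that $P',Q'\in{\mathcal{L}}_A(\mathcal{H})$, and this is immediate: since ${\mathcal{L}}_A(\mathcal{H})$ is a subalgebra of $\mathcal{B}(\mathcal{H})$, any sum of products of $T_1$ and $T_2$ stays inside it. An equivalent, more transparent route is to diagonalize $T$ through the ordinary ($A$-independent) involution $V=\frac{1}{\sqrt{2}}\begin{bmatrix} I & I \\ I & -I \end{bmatrix}$, which satisfies $V^2=I$ and $VTV=\begin{bmatrix} T_1+T_2 & O \\ O & T_1-T_2 \end{bmatrix}$; raising to the $n$-th power and conjugating back yields the explicit formulas $P=\frac{1}{2}\big[(T_1+T_2)^n+(T_1-T_2)^n\big]$ and $Q=\frac{1}{2}\big[(T_1+T_2)^n-(T_1-T_2)^n\big]$, from which $P\pm Q$ are read off at once. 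Either way the computations are routine; the only subtlety worth stressing is that the identity itself is purely multiplicative and uses nothing about the $\mathbb{A}$-structure, while the membership of $P,Q$ in ${\mathcal{L}}_A(\mathcal{H})$ follows solely from the subalgebra property. Consequently there is no genuine obstacle here beyond keeping track of the block entries correctly.
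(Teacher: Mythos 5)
Your proof is correct. Note, however, that the paper itself contains no proof of this lemma: it is stated without proof, with the remark that Hirzallah, Kittaneh and Shebrawi proved it for ordinary Hilbert-space operators and that a ``similar technique'' works in the semi-Hilbertian setting. Your induction is precisely that technique, and it is complete: the base case, the preservation of the symmetric block form under right-multiplication by $T$, and the multiplicativity of the pair $(P+Q,\,P-Q)$ are the whole content. Your alternative route via the involution $V=\frac{1}{\sqrt{2}}\begin{bmatrix} I & I \\ I & -I \end{bmatrix}$ is also valid: since $V^2=I$ and $VTV=\begin{bmatrix} T_1+T_2 & O \\ O & T_1-T_2 \end{bmatrix}$, one gets $T^n=V\begin{bmatrix} (T_1+T_2)^n & O \\ O & (T_1-T_2)^n \end{bmatrix}V$, which yields the closed formulas $P=\frac{1}{2}\left[(T_1+T_2)^n+(T_1-T_2)^n\right]$ and $Q=\frac{1}{2}\left[(T_1+T_2)^n-(T_1-T_2)^n\right]$; this is arguably cleaner, as it exhibits $P$ and $Q$ explicitly rather than recursively. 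One small point you assert rather than verify: that the hypothesis on $T$ (membership in the relevant operator class on $\mathcal{H}\oplus\mathcal{H}$, i.e.\ with respect to $\mathbb{A}$) forces $T_1,T_2\in{\mathcal{L}}_A(\mathcal{H})$. This does hold: $T^*\mathbb{A}$ has block entries $T_i^*A$, so applying it to vectors of the form $(x,0)$ and $(0,x)$ shows $\mathcal{R}(T_i^*A)\subseteq\mathcal{R}(A)$ for $i=1,2$; after that, the closure of $P,Q$ under sums and products inside the subalgebra ${\mathcal{L}}_A(\mathcal{H})$ finishes the membership claim exactly as you say.
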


The forthcoming result is analogous to Theorem \ref{Athm104}
\begin{theorem}\label{Athm105}
	Let $T_1, T_2\in {\mathcal{L}}_A(\mathcal{H})$. Then
	\begin{align}\label{Aeq120}
		w_{\mathbb{A}}\left(\begin{bmatrix}
	T_1 &T_2\\
	-T_2& -T_1
	\end{bmatrix}\right)\geq \big[\max \{w_A\left(((T_1-T_2)(T_1+T_2))^n\right), w_A\left(((T_1+T_2)(T_1-T_2))^n\right)\}\big]^{\frac{1}{2n}}
	\end{align}
for $n\in \mathbb{N}$ and 
\begin{align}\label{Aeq121}
w_{\mathbb{A}}\left(\begin{bmatrix}
T_1 &T_2\\
-T_2& -T_1
\end{bmatrix}\right)&\leq \frac{\max\{\|T_1+T_2\|_A, \|T_1-T_2\|_A\}}{2}\nonumber\\
&+\frac{[\max\{\|(T_1+T_2)(T_1-T_2)\|_A, \|(T_1-T_2)(T_1+T_2)\|_A\}]^{\frac{1}{2}}}{2}.
\end{align}
\end{theorem}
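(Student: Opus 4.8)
The plan is to route both bounds through the single observation that the square of $T:=\begin{bmatrix} T_1 & T_2\\ -T_2 & -T_1\end{bmatrix}$ has a symmetric block form. A direct multiplication gives
\[
T^2=\begin{bmatrix} T_1^2-T_2^2 & T_1T_2-T_2T_1\\ T_1T_2-T_2T_1 & T_1^2-T_2^2\end{bmatrix}=\begin{bmatrix} P & Q\\ Q & P\end{bmatrix},\qquad P:=T_1^2-T_2^2,\ Q:=T_1T_2-T_2T_1,
\]
and the two distinguished combinations are exactly $P+Q=(T_1-T_2)(T_1+T_2)$ and $P-Q=(T_1+T_2)(T_1-T_2)$, the operators appearing in \eqref{Aeq120} and \eqref{Aeq121}. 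Everything then reduces to applying the symmetric-block lemmas to $T^2$.

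For the lower bound \eqref{Aeq120}, I would apply Lemma \ref{lem0002} to $T^2$ (with its roles ``$T_1$''$=P$ and ``$T_2$''$=Q$) to obtain $T^{2n}=(T^2)^n=\begin{bmatrix} R & S\\ S & R\end{bmatrix}$ with $R+S=(P+Q)^n=\big((T_1-T_2)(T_1+T_2)\big)^n$ and $R-S=(P-Q)^n=\big((T_1+T_2)(T_1-T_2)\big)^n$. Lemma \ref{lem0001}(iv) then evaluates $w_{\mathbb{A}}(T^{2n})=\max\{w_A((P+Q)^n),w_A((P-Q)^n)\}$, while the power inequality \eqref{power} yields $w_{\mathbb{A}}(T^{2n})\le w_{\mathbb{A}}^{2n}(T)$. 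Combining these and extracting the $(2n)$-th root gives \eqref{Aeq120}. This is the exact analogue of the proof of Theorem \ref{Athm104}, with $T^2$ replacing the off-diagonal matrix as the carrier of the symmetric structure.

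For the upper bound \eqref{Aeq121}, I would apply Lemma \ref{lm0} to $T$ (noting $T\in{\mathcal{L}}_{\mathbb{A}^{1/2}}(\mathcal{H}\oplus\mathcal{H})$ by Lemma \ref{lm5}, since $T_1,T_2\in{\mathcal{L}}_A(\mathcal{H})\subseteq{\mathcal{L}}_{A^{1/2}}(\mathcal{H})$), obtaining $w_{\mathbb{A}}(T)\le\tfrac12\big(\|T\|_{\mathbb{A}}+\|T^2\|_{\mathbb{A}}^{1/2}\big)$, and then evaluate the two seminorms by conjugating with the same $\mathbb{A}$-unitary $U=\tfrac1{\sqrt2}\begin{bmatrix} I & -I\\ I & I\end{bmatrix}$ used in Theorem \ref{Athm100}. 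A direct computation gives $U^{\#_{\mathbb{A}}}TU=\begin{bmatrix} 0 & -(T_1-T_2)\\ -(T_1+T_2) & 0\end{bmatrix}$ and $U^{\#_{\mathbb{A}}}T^2U=\begin{bmatrix} (T_1-T_2)(T_1+T_2) & 0\\ 0 & (T_1+T_2)(T_1-T_2)\end{bmatrix}$ (up to the projections $P_{\overline{\mathcal{R}(A)}}$ carried by $U^{\#_{\mathbb{A}}}$). Since the $\mathbb{A}$-seminorm is invariant under $\mathbb{A}$-unitary conjugation, Lemma \ref{lemma1}(i) yields $\|T\|_{\mathbb{A}}=\max\{\|T_1+T_2\|_A,\|T_1-T_2\|_A\}$ and $\|T^2\|_{\mathbb{A}}=\max\{\|(T_1-T_2)(T_1+T_2)\|_A,\|(T_1+T_2)(T_1-T_2)\|_A\}$; substituting both into the bound from Lemma \ref{lm0} produces \eqref{Aeq121}.

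The main obstacle is the seminorm bookkeeping in the upper bound. I must justify that $\mathbb{A}$-unitary conjugation preserves the block $\mathbb{A}$-seminorm, so that $\|T\|_{\mathbb{A}}=\|U^{\#_{\mathbb{A}}}TU\|_{\mathbb{A}}$ and $\|T^2\|_{\mathbb{A}}=\|U^{\#_{\mathbb{A}}}T^2U\|_{\mathbb{A}}$, and verify the two conjugation formulas. The delicate point is that $U^{\#_{\mathbb{A}}}$ carries the projection $P_{\overline{\mathcal{R}(A)}}$ rather than $I$, so the reductions to off-diagonal and diagonal form hold only after these projections are absorbed. I would dispose of this exactly as in the proof of Theorem \ref{Athm100}, using $AP_{\overline{\mathcal{R}(A)}}=A=P_{\overline{\mathcal{R}(A)}}A$, so that every stray projection disappears inside $\langle\cdot,\cdot\rangle_{\mathbb{A}}$. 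The remaining steps, namely the block multiplications and the factorizations $P\pm Q=(T_1\mp T_2)(T_1\pm T_2)$, are routine.
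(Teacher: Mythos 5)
Your proof of the lower bound \eqref{Aeq120} coincides with the paper's: write $T^2$ as the symmetric block matrix with entries $P=T_1^2-T_2^2$ and $Q=T_1T_2-T_2T_1$, apply Lemma \ref{lem0002} to it, evaluate $w_{\mathbb{A}}(T^{2n})$ by Lemma \ref{lem0001}(iv), and finish with the power inequality \eqref{power}. For the upper bound \eqref{Aeq121} you use the paper's skeleton as well (Lemma \ref{lm0} combined with the identities \eqref{Aeq122} and \eqref{Aeq123}), but you compute the two seminorms by a genuinely different device. The paper stays inside facts it has already quoted: by \eqref{ineq0}, $\|T\|_{\mathbb{A}}^2=\|TT^{\#_{\mathbb{A}}}\|_{\mathbb{A}}=w_{\mathbb{A}}(TT^{\#_{\mathbb{A}}})$, and since $TT^{\#_{\mathbb{A}}}$ again has symmetric block form, Lemma \ref{lem0001}(iv), the factorizations of the sum and difference of its entries as $(T_1\mp T_2)(T_1\mp T_2)^{\#_A}$, and \eqref{ineq0} once more yield \eqref{Aeq122}; the same computation applied to $T^2$ yields \eqref{Aeq123}. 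You instead conjugate by the $\mathbb{A}$-unitary $U$ and read both identities off Lemma \ref{lemma1}(i); your block computations are correct, and the stray projections $P_{\overline{\mathcal{R}(A)}}$ are indeed harmless, since $AP_{\overline{\mathcal{R}(A)}}=A$ gives $\|P_{\overline{\mathcal{R}(A)}}X\|_A=\|X\|_A$. The one ingredient your route needs that the paper nowhere provides is the seminorm invariance $\|U^{\#_{\mathbb{A}}}TU\|_{\mathbb{A}}=\|T\|_{\mathbb{A}}$: property \eqref{weak} and the proof of Theorem \ref{Athm100} concern only the numerical radius, so ``exactly as in Theorem \ref{Athm100}'' does not literally cover this point. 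It is, however, true and easily filled: $\|U^{\#_{\mathbb{A}}}TU\|_{\mathbb{A}}\le\|T\|_{\mathbb{A}}$ follows from submultiplicativity together with $\|U\|_{\mathbb{A}}=\|U^{\#_{\mathbb{A}}}\|_{\mathbb{A}}=1$, and conversely $U\left(U^{\#_{\mathbb{A}}}TU\right)U^{\#_{\mathbb{A}}}=P_{\overline{\mathcal{R}(\mathbb{A})}}\,T\,P_{\overline{\mathcal{R}(\mathbb{A})}}$ has $\mathbb{A}$-seminorm equal to $\|T\|_{\mathbb{A}}$, because the supremum defining $\|\cdot\|_{\mathbb{A}}$ runs over vectors of $\overline{\mathcal{R}(\mathbb{A})}$, on which the projection acts as the identity and which is absorbed by $\mathbb{A}$ on the other side. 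So your argument is correct once this small lemma is recorded; the paper's computation is the more self-contained one, while yours has the merit that a single conjugation simultaneously off-diagonalizes $T$ and diagonalizes $T^2$, making both seminorm identities immediate consequences of Lemma \ref{lemma1}(i).
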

\begin{proof}
Let $T=\begin{bmatrix}
T_1 &T_2\\
-T_2& -T_1
\end{bmatrix}$ and $R=T^2=\begin{bmatrix}
T_1^2-T_2^2 &T_1T_2-T_2T_1\\
T_1T_2-T_2T_1& T_1^2-T_2^2
\end{bmatrix}$.	
Using Lemma \ref{lem0002} we have there exist $P, Q\in {\mathcal{L}}_A(\mathcal{H})$ such that 
$R^n=\begin{bmatrix}
P & Q\\
Q & P
\end{bmatrix}$ with $P+Q=((T_1^2-T_2^2)+(T_1T_2-T_2T_1))^n$
and $P-Q=((T_1^2-T_2^2)-(T_1T_2-T_2T_1))^n$.
So, $T^{2n}=\begin{bmatrix}
P & Q\\
Q & P
\end{bmatrix}$ with $P+Q=((T_1-T_2)(T_1+T_2))^n$ and $P-Q=((T_1+T_2)(T_1-T_2))^n$.
By using inequality \eqref{power}, we have 
\begin{align}
w_{\mathbb{A}}^{2n}(T)&\geq w_{\mathbb{A}}(T^{2n})\nonumber\\
&=w_{\mathbb{A}}\left(\begin{bmatrix}
P & Q\\
Q & P
\end{bmatrix}\right)\nonumber\\
&=\max\{w_A(P+Q), w_A(P-Q)\} ~~(\mbox{by Lemma \ref{lem0001}})\nonumber\\
&=\max \{w_A\left(((T_1-T_2)(T_1+T_2))^n\right), w_A\left(((T_1+T_2)(T_1-T_2))^n\right)\}.
\end{align}
This proves the inequality \eqref{Aeq120}.
In order to prove the inequality \eqref{Aeq121}, let  $T=\begin{bmatrix}
T_1 &T_2\\
-T_2& -T_1
\end{bmatrix}$. Then $T^{\#\mathbb{A}}=\begin{bmatrix}
T_1^{\#A} &-T_2^{\#A}\\
T_2^{\#A}& -T_1^{\#A}
\end{bmatrix}$, so $TT^{\#\mathbb{A}}=\begin{bmatrix}
T_1T_1^{\#A}+T_2T_2^{\#A} &-T_1T_2^{\#A}-T_2T_1^{\#A}\\
-T_2T_1^{\#A}-T_1T_2^{\#A}& T_2T_2^{\#A} +T_1T_1^{\#A}
\end{bmatrix}$.
Now it follows from \eqref{ineq0} that 
\begin{align*}
\|T\|_{\mathbb{A}}^2&=\|TT^{\#\mathbb{A}}\|_{\mathbb{A}}\\
&=w_A(TT^{\#\mathbb{A}})\\
&=\max\{w_A(T_1T_1^{\#A}+T_2T_2^{\#A}-T_1T_2^{\#A}-T_2T_1^{\#A}), w_A(T_1T_1^{\#A}+T_2T_2^{\#A}+T_1T_2^{\#A}+T_2T_1^{\#A})\}\\
&\hspace{11cm}(\mbox{by Lemma \ref{lem0001}})\\
&=\max\{w_A((T_1-T_2)(T_1-T_2)^{\#A}), w_A((T_1+T_2)(T_1+T_2)^{\#A})\}\\
&=\max\{\|(T_1-T_2)(T_1-T_2)^{\#A}\|_A, \|(T_1+T_2)(T_1+T_2)^{\#A}\|_A\}
\\
&=\max\{\|T_1-T_2\|_A^2, \|T_1+T_2\|_A^2\}.
\end{align*}
Thus 
\begin{align}\label{Aeq122}
\|T\|_{\mathbb{A}}=\max\{\|T_1-T_2\|_A, \|T_1+T_2\|_A\}.
\end{align}
Similarly we can show that 
\begin{align}\label{Aeq123}
\|T^2\|_{\mathbb{A}}=\max\{\|(T_1-T_2)(T_1+T_2)\|_A, \|(T_1+T_2)(T_1-T_2)\|_A\}.
\end{align}
From inequality \eqref{Aeq100}, combining inequality \eqref{Aeq122} and \eqref{Aeq123}, we obtain
\begin{align*}
w_{\mathbb{A}}(T)&\leq \frac{1}{2}(\|T\|_{\mathbb{A}}+\|T^2\|_{\mathbb{A}}^{1/2})\\
&=\frac{\max\{\|T_1+T_2\|_A, \|T_1-T_2\|_A\}}{2}\nonumber\\
&+\frac{[\max\{\|(T_1+T_2)(T_1-T_2)\|_A, \|(T_1-T_2)(T_1+T_2)\|_A\}]^{\frac{1}{2}}}{2}.
\end{align*}
\end{proof}
\subsection{Some $A$-numerical radius inequalities for operators}
In this subsection we establish some upper bounds for $A$-numerical radius of operators.
In the next result, we derive an upper bound for $A$-numerical radius of product of operators on semi-Hilbertian space.
\begin{theorem}\label{Athm103}
	Let	$T_1,T_2\in {\mathcal{L}}_A(\mathcal{H})$. Then 	
	$$w_A(T_1T_2)\leq \frac{1}{2}\bigg(\|T_2T_1\|_A+\|T_1\|_A\|T_2\|_A\bigg).$$
\end{theorem}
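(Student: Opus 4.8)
The plan is to apply the Kittaneh-type estimate recorded in Lemma~\ref{lm0} to the \emph{single} operator $T_1T_2$. Since $\mathcal{L}_A(\mathcal{H})$ is a subalgebra of $\mathcal{B}(\mathcal{H})$, the product satisfies $T_1T_2\in \mathcal{L}_A(\mathcal{H})\subseteq \mathcal{L}_{A^{1/2}}(\mathcal{H})$, so \eqref{Aeq100} is available and gives
\[
w_A(T_1T_2)\le \tfrac12\big(\|T_1T_2\|_A+\|(T_1T_2)^2\|_A^{1/2}\big).
\]
The first summand is handled at once by $A$-submultiplicativity, $\|T_1T_2\|_A\le \|T_1\|_A\|T_2\|_A$, so the entire problem is to control $\|(T_1T_2)^2\|_A$, and the real object of the exercise is to make the \emph{reversed} product $T_2T_1$ appear.

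The decisive algebraic step I would use is the factorization
\[
(T_1T_2)^2=T_1T_2T_1T_2=T_1\,(T_2T_1)\,T_2 ,
\]
which is precisely what forces $T_2T_1$ into the estimate. Two applications of $\|XY\|_A\le\|X\|_A\|Y\|_A$ then yield $\|(T_1T_2)^2\|_A\le\|T_1\|_A\,\|T_2T_1\|_A\,\|T_2\|_A$, hence
\[
\|(T_1T_2)^2\|_A^{1/2}\le\big(\|T_2T_1\|_A\cdot\|T_1\|_A\|T_2\|_A\big)^{1/2},
\]
and the arithmetic--geometric mean inequality splits this geometric mean into exactly $\tfrac12\big(\|T_2T_1\|_A+\|T_1\|_A\|T_2\|_A\big)$, the right-hand side of the claim.

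The step I expect to be the main obstacle is the final bookkeeping of constants. Feeding the two estimates naively into $\tfrac12(\,\cdot+\cdot\,)$ produces $w_A(T_1T_2)\le\tfrac12\|T_1T_2\|_A+\tfrac14\big(\|T_2T_1\|_A+\|T_1\|_A\|T_2\|_A\big)$, i.e. $\tfrac34\|T_1\|_A\|T_2\|_A+\tfrac14\|T_2T_1\|_A$, which is \emph{weaker} than the stated bound because of the stray $\tfrac12\|T_1T_2\|_A$ carried by Lemma~\ref{lm0}; so the crux is to remove that slack and land on the sharp coefficients $\tfrac12,\tfrac12$. I anticipate that reaching the exact constant requires passing through the real-part description \eqref{zm}, namely estimating $\|\Re_A(e^{i\theta}T_1T_2)\|_A$ directly and taking the supremum over $\theta$; using $(T_1T_2)^{\#_A}=T_2^{\#_A}T_1^{\#_A}$, this reduces the theorem to the single operator-seminorm inequality $\big\|T_1T_2+(T_1T_2)^{\#_A}\big\|_A\le \|T_2T_1\|_A+\|T_1\|_A\|T_2\|_A$ (with $e^{i\theta}$ absorbed into $T_1$), and it is this sharp self-adjoint-part bound, rather than the coarse application of Lemma~\ref{lm0}, that I expect to carry the clean constant.
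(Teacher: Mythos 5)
Your first route cannot close, as you yourself compute: Lemma~\ref{lm0} applied to $T_1T_2$, together with $\|(T_1T_2)^2\|_A\le\|T_1\|_A\|T_2T_1\|_A\|T_2\|_A$ and the arithmetic--geometric mean inequality, only yields $\tfrac34\|T_1\|_A\|T_2\|_A+\tfrac14\|T_2T_1\|_A$, which is a strictly weaker upper bound than the claim whenever $\|T_2T_1\|_A<\|T_1\|_A\|T_2\|_A$. Your fallback is indeed the direction the paper takes --- it starts from \eqref{zm} and estimates ${\|\Re_A(e^{i\theta}T_1T_2)\|}_A$ --- but your proposal stops exactly where the theorem begins: the ``single operator-seminorm inequality'' $\big\|e^{i\theta}T_1T_2+(e^{i\theta}T_1T_2)^{\#_A}\big\|_A\le \|T_2T_1\|_A+\|T_1\|_A\|T_2\|_A$ is not a reformulation that dissolves the difficulty, it \emph{is} the difficulty, and you give no argument for it. Nothing in your write-up explains how the reversed product $T_2T_1$ can be substituted for $T_1T_2$ with no loss in the constant; that substitution is the entire content of the statement.

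The paper's mechanism for this missing step is a spectral-radius commutation argument, and some device of this kind seems unavoidable, since the seminorm is not invariant under cyclic permutation but the spectral radius is. Concretely: $\Re_A(e^{i\theta}T_1T_2)$ is $A$-selfadjoint, so Lemma~\ref{ll2020} converts its seminorm into a ($2\times2$-embedded) $\mathbb{A}$-spectral radius, after checking that the block-diagonal embedding is $\mathbb{A}$-selfadjoint; one then factorizes
\[
\begin{bmatrix}
e^{i\theta}T_1T_2+e^{-i\theta}T_2^{\#_A}T_1^{\#_A} & 0\\
0& 0
\end{bmatrix}=\begin{bmatrix}
e^{i\theta}T_1& T_2^{\#_A}\\
0& 0
\end{bmatrix}\begin{bmatrix}
T_2 & 0\\
e^{-i\theta}T_1^{\#_A}& 0
\end{bmatrix},
\]
and the identity $r_{\mathbb{A}}(XY)=r_{\mathbb{A}}(YX)$ from \eqref{commut} reverses the two factors at no cost --- this is precisely what trades $T_1T_2$ for $T_2T_1$. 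The reversed product is $\begin{bmatrix}
e^{i\theta}T_2T_1 & T_2T_2^{\#_A}\\
T_1^{\#_A}T_1& T_1^{\#_A}T_2^{\#_A}
\end{bmatrix}$, and Lemma~\ref{lm5} together with \eqref{ineq0} dominates its $\mathbb{A}$-spectral radius by that of the scalar matrix $\begin{bmatrix}
\|T_2T_1\|_A & \|T_2\|_A^2\\
\|T_1\|_A^2 & \|T_2T_1\|_A
\end{bmatrix}$, which equals $\|T_2T_1\|_A+\|T_1\|_A\|T_2\|_A$; taking the supremum over $\theta$ via \eqref{zm} finishes. Until you supply this (or an equivalent) argument for your reduced inequality, the proposal does not prove the theorem.
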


\begin{proof}
	It is not difficult to see that $\Re_A(e^{i\theta}T_1T_2)$ is an $A$-selfadjoint operator. So, by Lemma \ref{ll2020} we have
	$${\left\|\Re_A(e^{i\theta}T_1T_2)\right\|}_A=w_A(\Re_A(e^{i\theta}T_1T_2)).$$
	So,
	\begin{align*}
	{\left\|\Re_A(e^{i\theta}T_1T_2)\right\|}_A&=\frac{1}{2}w_A\left(e^{i\theta}T_1T_2+e^{-i\theta}T_2^{\#_A}T_1^{\#_A}\right)\\
	&=\frac{1}{2}w_{\mathbb{A}}\left(\begin{bmatrix}
	e^{i\theta}T_1T_2+e^{-i\theta}T_2^{\#_A}T_1^{\#_A} & 0\\
	0& 0
	\end{bmatrix}\right)
	\end{align*}
	It can observed that 
	\begin{align*}
	\begin{bmatrix}
	A &0\\
	0 &A
	\end{bmatrix}\begin{bmatrix}
	e^{i\theta}T_1T_2+e^{-i\theta}T_2^{\#_A}T_1^{\#_A} & 0\\
	0& 0
	\end{bmatrix}&=\begin{bmatrix}
	e^{i\theta}AT_1T_2+e^{-i\theta}AT_2^{\#_A}T_1^{\#_A} & 0\\
	0& 0
	\end{bmatrix}\\
	&=\begin{bmatrix}
	e^{i\theta}(T_2^{\#_A}T_1^{\#_A})^*A+e^{-i\theta}(T_1T_2)^*A & 0\\
	0& 0
	\end{bmatrix}
	\\
	&=\begin{bmatrix}
	e^{-i\theta}T_2^{\#_A}T_1^{\#_A}+e^{i\theta}T_1T_2 & 0\\
	0& 0
	\end{bmatrix}^*\begin{bmatrix}
	A &0\\
	0 &A
	\end{bmatrix}
	\end{align*}
	Hence $\begin{bmatrix}
	e^{i\theta}T_1T_2+e^{-i\theta}T_2^{\#_A}T_1^{\#_A} & 0\\
	0& 0
	\end{bmatrix}$ is $\mathbb{A}$-selfadjoint operator.\\
	So by applying Lemma \ref{ll2020} we see that
	\begin{align*}
	{\left\|\Re_A(e^{i\theta}T_1T_2)\right\|}_A
	&=\frac{1}{2}r_{\mathbb{A}}\left(\begin{bmatrix}
	e^{i\theta}T_1T_2+e^{-i\theta}T_2^{\#_A}T_1^{\#_A} & 0\\
	0& 0
	\end{bmatrix}\right)\\
	&=\frac{1}{2}r_{\mathbb{A}}\left(\begin{bmatrix}
	e^{i\theta}T_1& T_2^{\#_A}\\
	0& 0
	\end{bmatrix}\begin{bmatrix}
	T_2 & 0\\
	e^{-i\theta}T_1^{\#_A}& 0
	\end{bmatrix}\right)
	\end{align*}
	So, by using \eqref{commut} we have
	\begin{align*}
	{\left\|\Re_A(e^{i\theta}T_1T_2)\right\|}_A
	&=\frac{1}{2}r_{\mathbb{A}}\left(\begin{bmatrix}
	T_2 & 0\\
	e^{-i\theta}T_1^{\#_A}& 0
	\end{bmatrix}\begin{bmatrix}
	e^{i\theta}T_1& T_2^{\#_A}\\
	0& 0
	\end{bmatrix}\right)\\
	&=\frac{1}{2}r_{\mathbb{A}}\left(\begin{bmatrix}
	e^{i\theta}T_2T_1 & T_2T_2^{\#_A}\\
	T_1^{\#_A}T_1& T_1^{\#_A}T_2^{\#_A}
	\end{bmatrix}\right)
	\\
	&\leq\frac{1}{2}r\left(\begin{bmatrix}
	\|T_2T_1\|_A & \|T_2T_2^{\#_A}\|_A\\
	\|T_1^{\#_A}T_1\|_A& \|T_1^{\#_A}T_2^{\#_A}\|_A
	\end{bmatrix}\right)~~~(\mbox{by Lemma}~\ref{lm5})\\
	&=\frac{1}{2}\bigg(\|T_2T_1\|_A+\|T_1\|_A\|T_2\|_A\bigg).
	\end{align*}
	So by taking supremum over $\theta \in \mathbb{R}$, then using \ref{zm} we get our desired result.
\end{proof}

\vspace{.6cm}
\noindent
{\small {\bf Acknowledgments.}\\
We  thank the {\bf Government of India} for introducing the {\it work from home initiative} during the COVID-19 pandemic.
}
\section{References}
	\bibliographystyle{amsplain}

\end{document}